\newtheorem{theorem}{Theorem}[section]
\newtheorem{remark}{Remark}[section]
\newtheorem{definition}{Definition}[section]
\begin{document}
 
\title{A closed-form multigrid smoothing factor for an additive Vanka-type smoother applied to the Poisson equation}

\author{Chen Greif\thanks{Department of Computer Science, The University of British Columbia, Vancouver, BC, V6T 1Z4, Canada. The work of first author  was supported in part by a Discovery Grant of the Natural Sciences and Engineering Research Council of Canada. \tt{greif@cs.ubc.ca},  \tt{yunhui.he@ubc.ca}. } 
\and Yunhui He\footnotemark[1]}

\maketitle
 
\begin{abstract}
We consider an additive Vanka-type smoother for the Poisson equation discretized by the standard finite difference centered scheme. 
Using local Fourier analysis, we derive analytical formulas for the optimal smoothing factors for  two types of smoothers,
called vertex-wise and element-wise Vanka smoothers, and  present  the corresponding stencils. Interestingly, in one dimension 
 the element-wise Vanka smoother  is equivalent to the scaled mass operator obtained from the linear finite element method, and in two dimensions the  
element-wise Vanka smoother is equivalent to the scaled mass operator discretized by bilinear finite element method plus a scaled identity operator.  Based on these discoveries,
the mass matrix obtained from finite element method can be used as an approximation to the inverse of  the Laplacian, and the resulting mass-based relaxation scheme features  small  smoothing factors in one, two, and three dimensions.  Advantages of the mass operator are that the operator is sparse and well conditioned, and the computational cost of the relaxation scheme is only one matrix-vector product; there is no need to compute the inverse of a matrix. These findings may help  better understand the efficiency of additive Vanka smoothers and develop fast solvers for numerical solutions of partial differential equations.
\end{abstract}

\vskip0.3cm {\bf Keywords.}
local Fourier analysis, multigrid, smoothing factor, additive Vanka-type smoother, mass matrix, finite difference method
%
%
\section{Introduction}

Consider  the Poisson equation in one dimension (1D) or  two dimensions (2D):
\begin{equation}
 -\Delta u = f, 
 \label{eq:Poisson}
 \end{equation}
where $u=u(x)$ and $f=f(x)$ in 1D or $u=u(x,y)$ and $f=f(x,y)$ in 2D. The function $f$ is assumed to be sufficiently smooth so that finite-difference discretizations of $u$ provide effective approximations of the solution.
The differential operator $\Delta$ stands for the {\em Laplacian}: $\Delta  \equiv \frac{d^2}{dx^2}$ in 1D and $\Delta  \equiv \frac{\partial^2 }{\partial x^2}+\frac{\partial^2}{\partial y^2}$ in 2D. We assume a uniform mesh discretization with meshsize $h$. 

We apply the standard three- and  five-point finite difference discretizations for the Laplacian; the corresponding stencils are  given by
\begin{equation}\label{eq:Laplace-stencil-1d}
  A_h = \frac{1}{h^2} \begin{bmatrix} -1 & 2 & -1 \end{bmatrix}
\end{equation}
and 
\begin{equation}\label{eq:Laplace-stencil-2d}
A_h=\frac{1}{h^2}
  \begin{bmatrix}
               &           -1            &               \\
   -1 &        4         & -1 \\
               &   -1                     &
  \end{bmatrix},
\end{equation}
respectively.

Let us denote the corresponding linear system by
\begin{equation}
  A_h u_h = b_h.
 \label{eq:linear_system}
\end{equation}

The numerical solution of \eqref{eq:linear_system} is one of  the most extensively explored topics in the numerical linear algebra literature. The discrete Laplacian $A_h$ is a symmetric positive definite M-matrix,  its eigenvalues are explicitly known, and it is used as a primary benchmark problem for the development of fast solvers. When the problem is large, iterative solvers that allow a high level of parallelism are often preferred.

One of the most efficient methods for solving \eqref{eq:linear_system} is {\em multigrid} \cite{stuben1982multigrid,MR1156079}. The choice of  multigrid components, such as a relaxation scheme as a smoother, plays an important role in the design of fast methods.  

The choice of additive Vanka as relaxation scheme, which is suitable for parallel computing, has recently drawn considerable attention. Vanka-type smoothers have been applied to   the Navier–Stokes equations \cite{john2002higher, SManservisi_2006a,SPVanka_1986a},  the Poisson equation using continuous and discontinuous finite elements methods \cite{he2021local},  the Stokes equations with finite element methods \cite{farrell2021local,voronin2021low}, poroelasticity  equations \cite{adler2021monolithic} in monolithic multigrid, and other problems.  A restricted additive Vanka for Stokes using $Q_1-Q_1$ discretizations is presented in \cite{saberi2021restricted}, which shows its competitiveness with the multiplicative Vanka smoother.  Nonoverlapping block smoothing using different patches has been discussed in  \cite{claus2021nonoverlapping} for the Stokes equations discretized by the marker-and-cell scheme.  Muliplicative Vanka smoothers  in combination with multigrid methods are discussed  in \cite{MR4024766,de2021two,franco2018multigrid}. Vanka-type relaxation has been used in many contexts, and for more details, we refer the reader to \cite{JAdler_etal_2015b,john2002higher, VJohn_LTobiska_2000a}.
 
In the literature,  Vanka-type relaxation schemes demonstrate their high efficiency in a multigrid setting, but there seems no theoretical analysis for the convergence speed even for the simple Poisson equation. In this work we take steps towards closing this gap by considering the additive Vanka relaxation for the Poisson equation, and exploring stencils for the Vanka patches. We derive analytical optimal smoothing factors for two types of additive Vanka patches  used in \cite{he2021local} for hybridized and embedded discontinuous {G}alerkin methods.    Moreover, we also find the corresponding stencils for the Vanka operator, and show that they are closely related to the scaled mass matrix obtained from the finite element method.  Based on this discovery, we propose the mass-based relaxation scheme, which yields rapid convergence. This mass-based relaxation is very simple: the computation cost is only matrix-vector product  and there is no need to solve the subproblems needed in an additive Vanka setting. Another advantage of the mass matrices obtained from (bi)linear elements is sparsity.  

Solvers for the Poisson equation often form the first step for designing fast solvers for more complex problems, such as the Stokes equations, and Navier-Stokes equations.  We therefore believe that the findings in this work can give some hints in future for designing fast numerical methods for these complex problems.  

The remainder of this paper is organized as follows. In Section \ref{sec:Vanka-intro} we introduce  the two types of additive Vanka smoothers for the Poisson equation.  In Section \ref{sec:LFA-Vanka}, we present our theoretical analysis of optimal  
smoothing factors in one and two dimensions.  Based on our analysis we also propose a 
mass-based smoother for the three-dimensional problem, where the mass matrix is obtained from the trilinear finite element method.  In Section \ref{sec:LFA-two-grid},  we numerically validate our analytical observations and  present an LFA two-grid convergence factor. Finally, in Section \ref{sec:conclusion} we discuss our findings and draw some conclusions.

\section{Vanka-type smoother}\label{sec:Vanka-intro}

We are interested in exploring the structure of  an additive  Vanka-type smoother for solving the linear system~\eqref{eq:linear_system} using multigrid. In general, this type can be thought of as related to the family of block Jacobi smoothers,
which are suitable for parallel computation and are typically highly efficient within the context of multigrid smoothing.

Let the degrees of freedom (DoFs) of $u_h$ be the set $\Upsilon$ such that $\Upsilon=\bigcup_{i=1}^{N}\Upsilon_i$. $V_i$ is a restriction operator that maps the vector  $u_h$ onto the vector in $\Upsilon_i$. Define $$A_i = V_i A_h V^T_i.$$ Then, we update current approximation $u^j$ by a single Vanka relaxation given by: \\
For $i=1,\cdots,N$,
\begin{equation*}
      A_i \delta u_i=V_i(b_h-A_hu^j)
\end{equation*}
and
\begin{equation*}
    u^{j+1} = u^j +\sum_{i=1}^{N}V_i^{T}W_i \delta u_i.
\end{equation*}

A single iteration of Vanka can be represented as
\begin{equation}\label{ASM-precondition}
  M = \sum_{i=1}^{N}V_i^{T}W_i A_i^{-1}V_i,
\end{equation}
where  the weighting matrix $W=W_i$ is given by the natural weights of the overlapping block decomposition. Each diagonal entry is equal to the reciprocal of the number 
of patches that the corresponding degree of freedom appears in. We refer to $M$ the {\em Vanka operator}.

For a single additive Vanka relaxation process, the relaxation error operator is given by
\begin{equation}\label{eq:relxation-error-operator}
  S = I -\omega M A_h.
\end{equation}
A key factor is the choice of the patch, that is, the $\{ \Upsilon_i \}$. Here, following \cite{he2021local}, we consider two patches, shown in Figure \ref{fig-patch}. We refer to the left patch 
in Figure \ref{fig-patch} as an {\it{element-wise patch}} and the right one as a {\it{vertex-wise patch}}. We denote the corresponding  relaxation error operators in \eqref{eq:relxation-error-operator} 
as $S_{e}$ and $S_v$, respectively.  The collection of circles indicate the number of DoFs in one patch $\Upsilon_i$. This means that the resulting subproblem is associated with a small matrix $A_i$ whose size is $4\times 4$ or $5\times 5$.  In the remainder of this work,  for simplicity and clarity we use subscripts $e$ and $v$ to distinguish between the corresponding operators for element-wise Vanka and vertex-wise Vanka, respectively. 

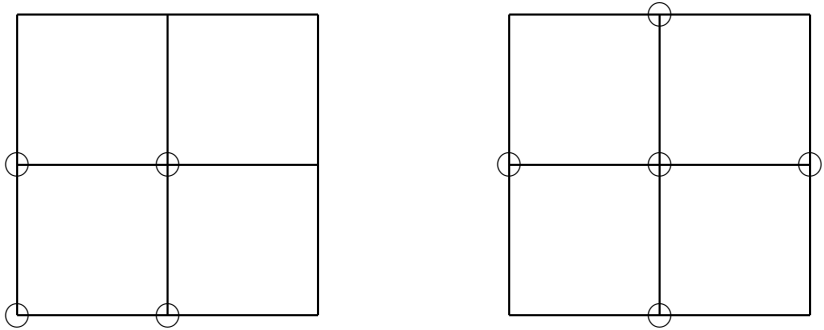
\begin{figure}[htp]
\centering

\begin{tikzpicture}[font=\normalsize]
  \draw [-] [thick] (0,0) --(0,4);
  \draw [-] [thick] (0,4) --(4,4);
  \draw [-] [thick] (0,0) --(4,0);
  \draw [-] [thick] (4,0) --(4,4);
  \draw [-] [thick] (0,2) --(4,2);
  \draw [-] [thick] (2,0) --(2,4);
  \node at (2,0) {$\bigcirc$};
  \node at (0,2) {$\bigcirc$};
  \node at (2,2) {$\bigcirc$};
  \node at (0,0) {$\bigcirc$};
\end{tikzpicture}
\hspace{2cm}
\begin{tikzpicture}[font=\normalsize]
  \draw [-] [thick] (0,0) --(0,4);
  \draw [-] [thick] (0,4) --(4,4);
  \draw [-] [thick] (0,0) --(4,0);
  \draw [-] [thick] (4,0) --(4,4);
  \draw [-] [thick] (0,2) --(4,2);
  \draw [-] [thick] (2,0) --(2,4);
  \node at (2,0) {$\bigcirc$};
  \node at (0,2) {$\bigcirc$};
  \node at (2,2) {$\bigcirc$};
  \node at (4,2) {$\bigcirc$};
  \node at (2,4) {$\bigcirc$};
\end{tikzpicture}
 \caption{Left: element-wise patch. Right: vertex-wise patch. }\label{fig-patch}
\end{figure}
%
\section{Local Fourier analysis}\label{sec:LFA-Vanka}

Local Fourier analysis (LFA) \cite{MR1807961,wienands2004practical} is a useful tool for predicting and analyzing the convergence behavior of multigrid  and other numerical algorithms. In this section, we employ LFA to examine the spectrum or spectral radius of the underlying operators to better understand  the proposed Vanka-type smoother.

When applying  LFA to multigrid methods, high and low frequencies \cite{MR1807961} for standard coarsening ($H=2h$) are introduced:
\begin{equation*}
  \boldsymbol{\theta}\in T^{{\rm low}} =\left[-\frac{\pi}{2},\frac{\pi}{2}\right)^{d}, \quad  \boldsymbol{\theta}\in T^{{\rm high}} =\displaystyle \left[-\frac{\pi}{2},\frac{3\pi}{2}\right)^{d} \bigg\backslash \left[-\frac{\pi}{2},\frac{\pi}{2}\right)^{d},
\end{equation*}
where $d$ is the dimension of the underlying problem. 

To quantitatively assess the performance of multigrid, LFA-predicted two-grid convergence and smoothing factors are used. In many cases the LFA smoothing factor is used due to its simplicity. It works under the assumption that the smoothing process 
reduces the high frequencies and leaves the low frequencies unchanged. The LFA smoothing factor typically offers a rather sharp bound on the actual two-grid performance. 
\begin{definition}\label{def:smoothing-factor}
Let $ S$ be the relaxation error operator. Then, the corresponding LFA smoothing factor for $\mathcal{S}$ is given by
\begin{equation}\label{eq:smoothing-factor}
  \mu(\omega)=\max_{\boldsymbol{\theta}\in T^{{
  \rm high}}}\Big\{\rho( \widetilde{S}(\boldsymbol{\theta},\omega)) \,\,\Big\},
\end{equation}
where $\widetilde{S}(\boldsymbol{\theta},\omega)$ is the symbol of $S$, $\omega$ is algorithmic parameter, and $\rho(\widetilde{S}(\boldsymbol{\theta},\omega))$ denotes the spectral radius of the matrix $\widetilde{S}(\boldsymbol{\theta},\omega)$.
\end{definition}
Note that the LFA smoothing factor $\mu$ is a function of $\omega$.  Often, one can minimize \eqref{eq:smoothing-factor} with respect to $\omega$ to obtain fast convergence speed. We define the optimal smoothing factor as
\begin{equation}
\mu_{\rm opt} = \min_{\omega} \mu.
\end{equation}
In this work, the symbol $\widetilde{S}$ for the Laplacian considered is a scalar, so the spectral radius is reduced to the maximum of a scalar function.  In the following, we use LFA to identify the optimal smoothing factor for the additive Vanka-type relaxation schemes and explore the structure of
the Vanka operator $M$ defined in \eqref{ASM-precondition}.  Before providing our detailed analysis of the smoothing factor for different relaxation schemes,
we summarize our results in  Table \ref{tab:smoothing-factor}. The table  provides a review of quantitative results  
of additive Vanka smoothers, and for comparison we include results for the standard point-wise damped Jacobi smoother. 

\begin{table}[H]
 \caption{The optimal LFA smoothing factor in 1D and 2D}
\medskip
\centering
\begin{tabular}{c c  c c }
\hline
   Smoother   &Jacobi       &AS-e               &AS-v     \\
\hline 
 \multicolumn{4} {c} {1D} \\
$\omega_{\rm opt}$                  & 2/3           &12/17        &  81/104                   \\
$\mu_{\rm opt}$                     &0.333         &0.059       &0.039                   \\ \hline
 \multicolumn{4} {c} {2D} \\
$\omega_{\rm opt}$                 &4/5          &24/25       &20/23                \\
$\mu_{\rm opt}$                    &0.600           &0.280        &0.391            \\
\hline
\end{tabular}\label{tab:smoothing-factor}
\end{table}

Note that   a general form of the symbol of additive Vanka operator for the Stokes equations has been discussed in \cite{farrell2021local},  which gives
$\widetilde{M} = \widetilde{V}^T \widetilde{W} \Phi^T A_i^{-1} \Phi \widetilde{V}$, where  $\Phi$  is called the {\em relative Fourier matrix}. Here, we can directly
apply the formula of $\widetilde{M} $ to our additive Vanka operator; see \cite{farrell2021local}.  

In the analysis that follows, we will use $\iota$ to denote the imaginary scalar satisfying
$$ \iota^2=-1.$$

\subsection{Symbols of Vanka patches in 1D}
In this subsection, we first consider the analytical symbol of the element-wise patch, then the vertex-wise patch for the Laplacian in 1D. 
 We discuss the optimal smoothing factor for each case and derive
the corresponding stencil for  the Vanka operator.   

\subsubsection{Element-wise Vanka patch  in 1D}
 It can easily be shown that  the symbol of $A_h$, see \eqref{eq:Laplace-stencil-1d},  is given by
\begin{equation}\label{eq:symbol-A-1d}
\widetilde{A}_h =\frac{1}{h^2}2(1-\cos\theta).
\end{equation}
Moreover, for the element-wise patch the subproblem matrix is
\begin{equation*}
   A_i = \frac{1}{h^2} 
         \begin{pmatrix}
2 & -1\\
-1 & 2
          \end{pmatrix}.
\end{equation*}
Following \cite{farrell2021local}, the  relative Fourier matrix $\Phi$  is    
\begin{equation*}       
 \Phi =\begin{pmatrix}
   1 & 0\\
 0&  e^{\iota \theta}
\end{pmatrix} .
\end{equation*}
Then,  the symbol of $M_{e}$ is given by $\widetilde{M}_{e} = \widetilde{V}^T \widetilde{W} \Phi^T A_i^{-1} \Phi \widetilde{V}$, where
\begin{equation*}
  \widetilde{V} =  
         \begin{pmatrix}
1 \\
1 
          \end{pmatrix}, \quad 
  \widetilde{W} =\frac{1}{2}\begin{pmatrix}
   1 & 0\\
 0&  1
\end{pmatrix} ,\quad
A_i^{-1} =\frac{h^2}{3} \begin{pmatrix}
 2 & 1\\
1 & 2
\end{pmatrix}.
\end{equation*}
Based on the above formulas, we obtain
\begin{equation}\label{eq:element-Vanka-symbol-1d}
\widetilde{M}_{e} =\frac{h^2}{6} (4+e^{\iota \theta}+ e^{-\iota \theta}).
\end{equation}
Formula \eqref{eq:element-Vanka-symbol-1d}  indicates that the element-wise Vanka patch corresponds to the stencil
\begin{equation}\label{Vanka-e-stencil-1d}
M_{e} = \frac{h^2}{6}  \begin{bmatrix}
 1 & 4 & 1
\end{bmatrix}.
\end{equation}
 Recall that the mass stencil  in 1D using linear finite elements is given by
\begin{equation}
M_{fe} = \frac{h}{6}  \begin{bmatrix}
 1 & 4 & 1
\end{bmatrix}.
\end{equation}
This means that the element-wise Vanka operator is equivalent to a scaled mass matrix obtained from the linear finite element method,  $M_{e}=h M_{fe}$.

Next, we give the optimal smoothing factor for the element-wise Vanka relaxation scheme.
\begin{theorem}
The optimal smoothing factor of $S_{e}$ for the vertex-wise Vanka in 1D  is 
\begin{equation}\label{eq:smoothing-AS-e-1d}
  \mu_{e,\rm opt}= \min_{\omega} \max _{\theta \in T^{\rm high}}{|1- \omega \widetilde{ M} _{e} \widetilde{A}_h| }=\frac{1}{17}\approx 0.059,
\end{equation}
where the minimum is uniquely achieved  at $\omega=\omega_{\rm opt} =\frac{12}{17}\approx 0.706$.
\end{theorem}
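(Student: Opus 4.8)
The plan is to turn this min--max over $\theta$ and $\omega$ into an elementary one-variable optimization, exploiting the fact that both symbols depend on $\theta$ only through $\cos\theta$. First I would form the product of the two symbols from \eqref{eq:symbol-A-1d} and \eqref{eq:element-Vanka-symbol-1d}. Writing $c=\cos\theta$ and using $e^{\iota\theta}+e^{-\iota\theta}=2\cos\theta$, the factors of $h^2$ cancel and I obtain
\begin{equation*}
  \widetilde{M}_e\,\widetilde{A}_h=\frac{h^2}{6}(4+2c)\cdot\frac{2}{h^2}(1-c)=\frac{2}{3}(2+c)(1-c)=:g(c).
\end{equation*}
Hence the error symbol $\widetilde S_e=1-\omega\,\widetilde M_e\widetilde A_h$ is a \emph{real scalar} $1-\omega g(c)$, and \eqref{eq:smoothing-AS-e-1d} becomes $\min_\omega\max_c|1-\omega g(c)|$, where $c$ ranges over the values of $\cos\theta$ on the high-frequency set.

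Next I would pin down that range. For $d=1$ the high frequencies are $\theta\in[\pi/2,3\pi/2)$, so $c=\cos\theta\in[-1,0]$. A short check on $g$ (a downward parabola in $c$ with vertex at $c=-1/2$) shows that the maximum $g(-1/2)=3/2$ and the endpoint minimum $g(0)=g(-1)=4/3$ are attained, so $g(c)$ sweeps the \emph{entire} interval $[4/3,\,3/2]$ as $c$ runs over $[-1,0]$. Consequently
\begin{equation*}
  \max_{\theta\in T^{\rm high}}|1-\omega\,\widetilde M_e\widetilde A_h|=\max_{t\in[4/3,\,3/2]}|1-\omega t|.
\end{equation*}

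The final step is the standard equioscillation argument. Since $t\mapsto|1-\omega t|$ is piecewise linear and convex, its maximum over $[4/3,3/2]$ is attained at an endpoint, i.e.\ it equals $\max\{|1-\tfrac{4}{3}\omega|,\,|1-\tfrac{3}{2}\omega|\}$. Minimizing this over $\omega$ forces the two endpoint values to balance with opposite signs, $1-\tfrac{4}{3}\omega=-(1-\tfrac{3}{2}\omega)$, which gives $\omega=\tfrac{2}{4/3+3/2}=\tfrac{12}{17}$ and common value $\tfrac{3/2-4/3}{3/2+4/3}=\tfrac{1}{17}$. Uniqueness follows by observing that near this $\omega$ the two relevant absolute values reduce to $1-\tfrac{4}{3}\omega$ (strictly decreasing) and $\tfrac{3}{2}\omega-1$ (strictly increasing), so their upper envelope has a single kink minimizer.

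I expect the only genuine obstacle to be the bookkeeping in the middle step: correctly identifying the range of $\cos\theta$ on $T^{\rm high}$ and confirming that $g$ attains \emph{every} value in $[4/3,\,3/2]$ rather than a subinterval, since the equioscillation value $1/17$ depends on using exactly the endpoints $4/3$ and $3/2$. Once those two numbers are secured, the remaining min--max is routine.
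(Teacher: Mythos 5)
Your proposal is correct and follows essentially the same route as the paper: reduce the product symbol to $\frac{2}{3}(2-\cos\theta-\cos^2\theta)$, show its range over $T^{\rm high}$ is exactly $\left[\frac{4}{3},\frac{3}{2}\right]$, and balance the two endpoint values to get $\omega_{\rm opt}=\frac{12}{17}$ and $\mu_{\rm opt}=\frac{1}{17}$. Your write-up is in fact slightly more careful than the paper's, since it justifies the range via the parabola's vertex and explicitly argues uniqueness of the minimizer, both of which the paper leaves implicit.
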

\begin{proof}
When $\theta\in T^{\rm high} =[\frac{\pi}{2}, \frac{3\pi}{2}]$,
\begin{equation*}
\widetilde{ M} _{e} \widetilde{A}_ h=\frac{2}{3}(2-\cos \theta-\cos^2\theta) \in \left[\frac{4}{3},\frac{3}{2}\right].
\end{equation*}
Thus,
\begin{equation*}
\mu(\omega)= \max {|1- \omega \widetilde{ M} _{e} \widetilde{A}_h| }= \max\left \{ \left|1-\omega\frac{4}{3}\right|, \left|1-\omega\frac{3}{2}\right|\right \}.
\end{equation*}
 To minimize $\mu(\omega)$, we require
\begin{equation*}
\left|1-\omega\frac{4}{3}\right| = \left|1-\omega\frac{3}{2}\right|,
\end{equation*}
which gives   $\omega=\frac{2}{4/3+3/2}=\frac{12}{17}$. Then,  $\mu_{\rm opt}=1-\frac{2}{4/3+3/2}\frac{4}{3}=\frac{1}{17}$.
\end{proof}
It is well known that the optimal smoothing factor for damped Jacobi relaxation for the Laplacian in 1D (with $\omega=\frac{2}{3}$) is $\frac{1}{3}\approx 0.333 \gg 0.059$.  This suggests that using  the additive Vanka smoother for multigrid  achieves much faster convergence.

%
 \subsubsection{Vertex-wise Vanka patches in 1D}
We now consider the vertex-wise patch.   The subproblem matrix is
\begin{equation*}
   A_i = \frac{1}{h^2} 
         \begin{pmatrix}
2 & -1 & 0\\
-1 & 2 & -1\\
0 & -1 & 2
          \end{pmatrix}.
\end{equation*}
Again, following \cite{farrell2021local}, the  relative Fourier matrix $\Phi$  is  
\begin{equation*}
  \Phi =\begin{pmatrix}
e^{-\iota \theta} & 0 & 0\\
   0& 1 & 0\\
 0& 0&   e^{\iota \theta}
\end{pmatrix} .
\end{equation*}
Then,  the symbol of $M_{v}$ is given by $\widetilde{M}_{v} = \widetilde{V}^T \widetilde{W} \Phi^T A_i^{-1} \Phi \widetilde{V}$, where
\begin{equation*}
  \widetilde{V} =  
         \begin{pmatrix}
1 \\
1 \\
1
          \end{pmatrix}, \quad 
  \widetilde{W} =\frac{1}{3}\begin{pmatrix}
   1 & 0 & 0\\
 0&  1 & 0\\
0& 0& 1
\end{pmatrix} ,\quad
A_i^{-1} =\frac{h^2}{4} \begin{pmatrix}
 3& 2 & 1\\
2 & 4 & 2\\
1& 2& 3
\end{pmatrix}.
\end{equation*}
Using the above formulas, we have
\begin{equation}\label{eq:symbol-Mv-1d}
\widetilde{M} =\widetilde{V}^T \widetilde{W} \Phi^T A_i^{-1} \Phi \widetilde{V} = \frac{h^2}{12} (10+ 4e^{\iota \theta}+ 4e^{-\iota \theta}+ e^{\iota 2\theta}+ 4e^{-\iota2 \theta}  ).
\end{equation}
Based on \eqref{eq:symbol-Mv-1d},  the  stencil of $M_v$ is 
\begin{equation}\label{Vanka-v-stencil-1d}
M_v = \frac{h^2}{12}  \begin{bmatrix}
 1 & 4 & 10& 4& 1
\end{bmatrix}.
\end{equation}
Compared with \eqref{Vanka-e-stencil-1d},  the vertex-wise Vanka uses a wider stencil.
\begin{theorem}
The optimal smoothing factor of $S_{v}$ for the vertex-wise Vanka in 1D  is  
\begin{equation}\label{eq:smoothing-AS-vertex-1d}
  \mu_{v,\rm opt} = \min_{\omega} \max_{\theta\in T^{\rm high}} {|1- \omega \widetilde{ M} _{v} \widetilde{A}_h|}=\frac{1}{26}\approx 0.039,
\end{equation}
where the minimum is uniquely achieved at $\omega=\omega_{\rm opt} =\frac{81}{104}\approx 0.779$. 
\end{theorem}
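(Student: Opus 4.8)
The plan is to follow the same template as the element-wise case treated in the previous theorem, since the symbol $\widetilde{S}_v$ is again a scalar and the smoothing factor reduces to optimizing a one-variable function. First I would form the scalar product $\widetilde{M}_v\widetilde{A}_h$, taking the symbol $\widetilde{M}_v$ from the symmetric stencil \eqref{Vanka-v-stencil-1d}, namely $\widetilde{M}_v=\frac{h^2}{12}\big(10+8\cos\theta+2\cos 2\theta\big)$, together with $\widetilde{A}_h=\frac{2}{h^2}(1-\cos\theta)$ from \eqref{eq:symbol-A-1d}. The factors of $h^2$ cancel, and after writing $c=\cos\theta$ and using $\cos 2\theta=2c^2-1$ the product collapses to a cubic,
\begin{equation*}
\widetilde{M}_v\widetilde{A}_h=\frac{2}{3}\big(2-c^2-c^3\big),\qquad c=\cos\theta.
\end{equation*}

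Next I would determine the range of this expression as $\theta$ ranges over $T^{\rm high}=[\tfrac{\pi}{2},\tfrac{3\pi}{2}]$, which corresponds exactly to $c=\cos\theta\in[-1,0]$. This range computation is the crux of the argument and the step I expect to be the main obstacle: the cubic $g(c)=\frac{2}{3}(2-c^2-c^3)$ is not monotone on $[-1,0]$, so I cannot simply evaluate at the endpoints. Instead I would differentiate, $g'(c)=-\frac{2}{3}c(2+3c)$, and locate the critical points $c=0$ and $c=-\tfrac{2}{3}$. In contrast to the element-wise case, where the interior critical point furnished the maximum, here $g''(-\tfrac{2}{3})=\tfrac{4}{3}>0$ shows $c=-\tfrac{2}{3}$ is an interior minimum with $g(-\tfrac{2}{3})=\frac{100}{81}$, while both endpoints give $g(0)=g(-1)=\frac{4}{3}$. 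Hence the image of $\widetilde{M}_v\widetilde{A}_h$ on $T^{\rm high}$ is the full interval $\big[\frac{100}{81},\frac{4}{3}\big]$.

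Finally I would carry out the same balancing argument as before. Since $x\mapsto|1-\omega x|$ is convex, its maximum over $\big[\frac{100}{81},\frac{4}{3}\big]$ is attained at one of the two endpoints, so
\begin{equation*}
\mu_v(\omega)=\max\Big\{\big|1-\tfrac{100}{81}\omega\big|,\ \big|1-\tfrac{4}{3}\omega\big|\Big\}.
\end{equation*}
The minimizing $\omega$ equioscillates these two values, i.e.\ solves $1-\tfrac{100}{81}\omega=\tfrac{4}{3}\omega-1$, which yields $\omega_{\rm opt}=\frac{2}{100/81+4/3}=\frac{81}{104}$ and $\mu_{v,\rm opt}=1-\frac{100}{81}\cdot\frac{81}{104}=\frac{1}{26}$. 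Uniqueness follows because on the relevant window $\omega\in(\tfrac34,\tfrac{81}{100})$ the branch $1-\tfrac{100}{81}\omega$ is strictly decreasing and the branch $\tfrac{4}{3}\omega-1$ is strictly increasing, so they cross exactly once. Apart from the non-monotone cubic, every step is a routine repetition of the element-wise computation.
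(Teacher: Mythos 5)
Your proof is correct and follows essentially the same route as the paper: reduce $\widetilde{M}_v\widetilde{A}_h$ to the cubic $\tfrac{2}{3}(2-c^2-c^3)$ in $c=\cos\theta$, determine its range $\left[\tfrac{100}{81},\tfrac{4}{3}\right]$ on $c\in[-1,0]$, and equioscillate the two extreme values to obtain $\omega_{\rm opt}=\tfrac{81}{104}$ and $\mu_{v,\rm opt}=\tfrac{1}{26}$. In fact your range analysis is more careful than the paper's own, which asserts $g'(x)<0$ on all of $[-1,0]$ (false, since $g'>0$ on $(-\tfrac{2}{3},0)$) even though it then states the correct extreme values; you correctly locate both critical points and classify $c=-\tfrac{2}{3}$ as an interior minimum with maxima at the two endpoints.
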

%
\begin{proof}
From \eqref{eq:symbol-A-1d} and \eqref{eq:symbol-Mv-1d}, we have
\begin{align*}
\widetilde{ M} _{v} \widetilde{A}_h& = \frac{1}{3}(1-\cos\theta)\left(5+4\cos \theta +\cos(2 \theta)\right),\\
                            & = \frac{2}{3}(1-\cos\theta)(2+2\cos \theta +\cos^2 \theta),\\
                            & = \frac{2}{3}(1-\cos\theta)\left( (\cos \theta +1)^2+1 \right).
\end{align*}
Note that when $\theta\in T^{\rm high} =[\frac{\pi}{2}, \frac{3\pi}{2}], \cos\theta \in[-1,0]$. Let $g(x)= \frac{2}{3}(1-x)\left( (x +1)^2+1 \right)$, where $x=\cos\theta \in[-1,0]$. 
To identify the range of $g(x)$, we first compute its derivative:
\begin{equation*}
g'(x) = \frac{-2 }{3}x(3x+2) <0,  \quad \text{for} \,\, x \in[-1,0].
\end{equation*}
It follows that 
\begin{equation*}
\left(\frac{10}{9}\right)^2=g(-2/3)\leq g(x)\leq g(0)=g(1)=\frac{4}{3}.
\end{equation*}
That is, $\widetilde{ M} _{v} \widetilde{A}_h \in [(\frac{10}{9})^2, \frac{4}{3}]$  for $\theta\in T^{\rm high}$ . To minimize $\mu(\omega)$, we require   $\omega=\frac{2}{4/3+(10/9)^2}=\frac{81}{104}$. Then,
 it follows that $\mu_{\rm opt} =1-\frac{2}{4/3+(10/9)^2}(\frac{10}{9})^2=\frac{1}{26}.$
 
\end{proof}
Again, the optimal smoothing factor for vertex-wise Vanka is significantly smaller (and hence better) than that of the damped Jacobi relaxation scheme.  
\subsection{Symbols of Vanka patches in 2D}
Similarly to previous subsection, we first consider the analytical symbol for the element-wise patch, then for the vertex-wise patch for the Laplacian in 2D. 
 
\subsubsection{Element-wise Vanka patch in 2D}
 The symbol of Laplace operator discretized by five-point stencil, see \eqref{eq:Laplace-stencil-2d}, is
\begin{equation}\label{eq:symbol-Ah}
\widetilde{A}_h =\frac{1}{h^2} (4-e^{\iota \theta_1} - e^{\iota \theta_2}-e^{-\iota \theta_1}-e^{-\iota \theta_2})=\frac{1}{h^2}\big(4-2\cos\theta_1-2\cos\theta_2\big).
\end{equation}
For the vertex-wise patch, following \cite{farrell2021local}, the  relative Fourier matrix $\Phi$  is  
\begin{equation*}
   \Phi =\begin{pmatrix}
   1&    0 &     0& 0 \\
   0&      e^{\iota \theta_1}&    0& 0 \\
   0& 0  & e^{\iota \theta_2}& 0\\
   0& 0 &  0 & e^{\iota (\theta_1+\theta_2)}
  \end{pmatrix}.
\end{equation*}
Then,  the symbol of $M_{e}$ is given by $\widetilde{M}_{e} = \widetilde{V}^T \widetilde{W} \Phi^T A_i^{-1} \Phi\widetilde{V} $, where
\begin{equation*}
  \widetilde{W} = \frac{1}{4}
\begin{pmatrix}
1 & 0 & 0& 0\\
0 & 1 & 0& 0\\
0& 0 & 1& 0\\
0 & 0 & 0& 1
\end{pmatrix} ,\quad 
 \widetilde{V}= \begin{pmatrix} 1 \\ 1 \\ 1 \\ 1\end{pmatrix}, \quad 
  A_i =\frac{1}{h^2}\begin{pmatrix}
  4 &  -1 &   -1  &  0  \\
  -1 &  4 &  0  &   -1 \\
  -1&   0 &  4&  -1 \\
  0 &  -1 & -1  & 4
  \end{pmatrix}.
\end{equation*}
We have
\begin{equation*}
  A_i^{-1} =h^2\begin{pmatrix}
       7/24 &          1/12   &        1/12     &      1/24\\
       1/12 &          7/24   &        1/24     &      1/12\\
       1/12 &         1/24    &       7/24      &     1/12\\
       1/24 &         1/12    &       1/12      &     7/24
  \end{pmatrix},
\end{equation*}
and it follows that  
\begin{align}
\widetilde{ M}_{e} & =\widetilde{V}^T \widetilde{W} \Phi^T A_i^{-1} \Phi \widetilde{V}, \nonumber\\
                    & =  \frac{h^2}{96}\big(28+4( e^{\iota \theta_1}+e^{-\iota \theta_1} +e^{\iota \theta_2}+e^{-\iota \theta_2})+e^{\iota (\theta_1+\theta_2)}
                   +e^{-\iota (\theta_1+\theta_2)}+(e^{\iota \theta_1}e^{-\iota \theta_2}+e^{-\iota \theta_1}e^{\iota \theta_2} )\big),\nonumber\\
                    & =\frac{h^2}{24} \big(7+2(\cos\theta_1 + \cos \theta_2)+ \cos \theta_1\cos\theta_2 \big). \label{eq:symbol-Me-2d}
\end{align}
Next, we give the optimal smoothing factor for the element-wise Vanka relaxation in 2D.
\begin{theorem}
 The optimal smoothing factor of $S_{e}$ for the element-wise Vanka relaxation in 2D is given by
\begin{equation}\label{eq:smoothing-AS-element}
  \mu_{e,\rm opt}= \min_{\omega} \max_{\boldsymbol{\theta}\in T^{{
  \rm high}}} {|1- \omega \widetilde{ M} _{e} \widetilde{A}_h|}=\frac{7}{25}\approx 0.280,
\end{equation}
where the minimum is uniquely achieved at $\omega=\omega_{\rm opt} =\frac{24}{25}\approx 0.960$. 
\end{theorem}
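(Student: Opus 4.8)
The plan is to reduce this two-dimensional maximization to elementary one-variable calculus by first cancelling the mesh-dependent factors and then exploiting the algebraic structure of the symbol product. Using \eqref{eq:symbol-Ah} and \eqref{eq:symbol-Me-2d}, the factors of $h^2$ cancel, and writing $x=\cos\theta_1$, $y=\cos\theta_2$ the product becomes
\[
\widetilde{M}_e\,\widetilde{A}_h=\frac{1}{24}\big(7+2(x+y)+xy\big)\big(4-2(x+y)\big)=:P(x,y).
\]
Next I would translate the high-frequency constraint $\boldsymbol{\theta}\in T^{\rm high}$ into a constraint on $(x,y)$: a component $\theta_j$ lies in the high band $[\pi/2,3\pi/2)$ exactly when $\cos\theta_j\le 0$, so the admissible set is $R=\{(x,y)\in[-1,1]^2 : x\le 0 \text{ or } y\le 0\}$, and as $\boldsymbol{\theta}$ ranges over $T^{\rm high}$ the pair $(x,y)$ covers all of $R$. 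The smoothing factor is then $\mu(\omega)=\max_{(x,y)\in R}|1-\omega P(x,y)|$, so the crux is to determine the exact range $[P_{\min},P_{\max}]$ of $P$ over $R$.

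The observation that makes this tractable is to set $s=x+y$ and $p=xy$, so that
\[
P=\frac{1}{24}\big(28-6s-4s^2+(4-2s)\,p\big)
\]
is \emph{affine} in $p$ for each fixed $s$, with positive slope $4-2s$ on $R$ (since one coordinate is nonpositive forces $s\le 1<2$). Hence for each fixed $s$ the extrema of $P$ are attained at the endpoints of the feasible interval of $p$. I would check that for $s\le 0$ the whole segment $x+y=s$, $x,y\in[-1,1]$, lies in $R$ (both coordinates positive is impossible), while for $s\in(0,1]$ the admissible portion still yields a single interval of $p$-values; in either case the extreme products are the edge values $p_{\min}(s),p_{\max}(s)$ obtained with one coordinate at an endpoint. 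Substituting these back produces explicit polynomials in $s$ on $[-2,1]$, whose optimization I expect to give $P_{\max}=\tfrac{4}{3}$ at $(x,y)=(-1,-1)$ (i.e.\ $\theta_1=\theta_2=\pi$) and $P_{\min}=\tfrac{3}{4}$ at $(0,1)$ and $(1,0)$.

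With the range $P\in[\tfrac34,\tfrac43]$ in hand, the final step is the standard damping optimization: $\mu(\omega)=\max\{|1-\tfrac34\omega|,\,|1-\tfrac43\omega|\}$, and equating the two branches (equioscillation) yields the unique minimizer $\omega_{\rm opt}=2/(\tfrac34+\tfrac43)=\tfrac{24}{25}$ with $\mu_{\rm opt}=1-\tfrac34\cdot\tfrac{24}{25}=\tfrac{7}{25}$. The hard part will be the second step: correctly characterizing $R$ and verifying that, for each fixed $s$, the constraint ``at least one cosine is nonpositive'' still confines the feasible $p$-values to a single interval, so that the affine-in-$p$ argument applies cleanly; once that is settled, the polynomial optimization in $s$ and the damping computation are routine.
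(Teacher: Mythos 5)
Your strategy reaches the correct answer but by a genuinely different route than the paper. The paper works directly with the two-variable function $g(\eta_1,\eta_2)=\frac{1}{12}\bigl(7+2(\eta_1+\eta_2)+\eta_1\eta_2\bigr)\bigl(2-\eta_1-\eta_2\bigr)$ on the square $[-1,1]^2$: it solves $g_{\eta_1}=g_{\eta_2}=0$ to find the single critical point $(-1,-1)$ with value $\tfrac{4}{3}$, parametrizes the edges $\eta_1=\pm 1$ by one variable, and then restricts the candidates to the high-frequency set to read off the range $[\tfrac34,\tfrac43]$, finishing with the same equioscillation step you use. Your reduction to $s=x+y$, $p=xy$, exploiting that $P$ is affine and increasing in $p$ for each fixed $s$, is an attractive alternative: it turns the problem into optimizing explicit one-variable polynomials, and it handles the admissible region $R=\{(x,y)\in[-1,1]^2:\min(x,y)\le 0\}$ (which is indeed the exact image of $T^{\rm high}$ under $(\cos\theta_1,\cos\theta_2)$) more carefully than the paper, which never explicitly checks the inner boundary segments $\{x=0,\,0\le y\le 1\}$ and $\{y=0,\,0\le x\le 1\}$ of that region.

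There is, however, one concrete error in your plan: the claim that both $p_{\min}(s)$ and $p_{\max}(s)$ are ``obtained with one coordinate at an endpoint.'' This holds for $p_{\min}(s)$, and for both extremes when $s\in(0,1]$, but for $s\in(-2,0)$ the maximum of $p=xy$ along the segment $x+y=s$ is $p_{\max}(s)=s^2/4$, attained at the midpoint $x=y=s/2$; both segment endpoints give the \emph{same} value $p=-(s+1)=p_{\min}(s)$. If you substitute only endpoint values, your max branch becomes $\frac{1}{12}(12-4s-s^2)$, which underestimates the true per-$s$ maximum of $P$, so maximizing it cannot legitimately establish the upper bound $P\le\tfrac43$. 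The repair stays entirely within your framework: with the correct $p_{\max}(s)=s^2/4$, the max branch is $\frac{1}{24}\bigl(28-6s-3s^2-\tfrac12 s^3\bigr)$, whose derivative $-\tfrac{1}{16}(s+2)^2\le 0$ shows it is nonincreasing on $[-2,0]$, so its maximum is still $\tfrac43$, attained at $s=-2$, i.e.\ at $(x,y)=(-1,-1)$. With that fix, your range $P\in[\tfrac34,\tfrac43]$ and hence $\omega_{\rm opt}=\tfrac{24}{25}$, $\mu_{e,\rm opt}=\tfrac{7}{25}$ follow exactly as you outline, in agreement with the paper.
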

\begin{proof}
From \eqref{eq:symbol-Ah} and \eqref{eq:symbol-Me-2d}, we have 
\begin{align*}
\widetilde{ M}_{e}\widetilde{A}_h
  & =  \frac{1}{24} \big(7+2(\cos\theta_1 + \cos \theta_2)+ \cos \theta_1\cos \theta_2 \big)\big(4-2\cos\theta_1-2\cos\theta_2\big),\\
  & = \frac{1}{12} \big(7+2(\cos\theta_1 + \cos \theta_2)+ \cos \theta_1 \cos \theta_2 \big) \big(2-(\cos\theta_1+\cos\theta_2)\big),\\
  & = \frac{1}{12}\big(7+2(\eta_1 +\eta_2)+ \eta_1\eta_2\big) \big(2-\eta_1-\eta_2\big),
\end{align*}
where $\eta_1=\cos\theta_1, \eta_2=\cos\theta_2$.
Let
\begin{equation*}
  g(\eta_1,\eta_2) = \frac{1}{12}\big(7+2(\eta_1 +\eta_2)+ \eta_1\eta_2\big) \big(2-\eta_1-\eta_2\big).
\end{equation*}

We have $(\eta_1,\eta_2)\in[-1,-1]^2=:\Omega$ and $g(\eta_1,\eta_2)$ is continuous in $\Omega$. By the Extreme Value Theorem,
 $g$ achieves its extremal values at the boundary of $\Omega$ or its derivatives are zeros. We first consider the derivatives, 
\begin{align*}
g_{\eta_1} & = \frac{1}{12} (-3-4\eta_1-2\eta_1\eta_2-\eta_2^2-2\eta_2),\\
g_{\eta_2} & = \frac{1}{12} (-3-4\eta_2-2\eta_1\eta_2-\eta_2^2-2\eta_1).
\end{align*} 
 Solving $g_{\eta_1}=g_{\eta_1}=0$ gives $\eta_1=\eta_2=-1$.  Thus, $ g(-1,-1)=\frac{4}{3}$ is a possible global extreme value.

Next, we compute the extremal values of $g(\eta_1,\eta_2)$ at the boundary $\Omega$. Due to the symmetric of $g$, we only need to consider the following two cases.

\begin{itemize}
\item {\bf Case 1}: $\eta_1=-1$ and $\eta_2\in[-1,1]$.  We have
\begin{equation*}
  g(-1,\eta_2) = \frac{1}{12}(5+\eta_2)(3-\eta_2).
\end{equation*}
Thus,
\begin{equation*}
1 =g(-1,1) \leq g(-1,\eta_2)\leq g(-1,-1)=\frac{4}{3}.
\end{equation*}
\item {\bf Case 2}:   $\eta_1=1$ and $\eta_2\in[-1,1]$. We have
\begin{equation*}
  g(1,\eta_2) = \frac{1}{4}(3+\eta_2)(1-\eta_2).
\end{equation*}
Thus,
\begin{equation*}
0 =g(1,1) \leq g(1,\eta_2)\leq g(1,-1)=1.
\end{equation*}

\end{itemize}

If we restrict $(\theta_1,\theta_2)\in T^{\rm high}$,  we find that the maximum and minimum of $\widetilde{ M} _{e} \widetilde{A}_h=g(\eta_1,\eta_2) $  are given by
\begin{equation}
g(-1,-1) =\frac{4}{3}, \quad g(1,0) =\frac{3}{4},
\end{equation}
respectively. It follows that $\omega_{\rm opt} =\frac{2}{3/4+4/3}=\frac{24}{25} $ and  $\mu_{e,\rm opt}=  1-\frac{2}{4/3+3/4}\frac{3}{4}=\frac{7}{25}$. 
\end{proof}
It is well known that the optimal smoothing factor for damped Jacobi relaxation for the Laplacian in 2D is $\frac{3}{5}$ with $\omega=\frac{4}{5}$  \cite{MR1807961}. 
 This suggests that using the additive Vanka smoother for multigrid method, convergence is faster compared to the damped Jacobi relaxation scheme.

Based on the symbol of $M_e$, we obtain the stencil of $M_e$, 
\begin{equation}\label{eq:stencil-e-2d}
M_e = \frac{h^2}{96}
\begin{bmatrix}
 1& 4 & 1\\
4 & 28 & 4 \\
1 & 4 &1
\end{bmatrix}.
\end{equation} 
Recall that the mass matrix stencil using bilinear finite elements is 
\begin{equation}\label{eq:Mfe-2d}
M_{fe}= \frac{h^2}{36}
\begin{bmatrix}
 1& 4 & 1\\
4 & 16 & 4 \\
1 & 4 &1
\end{bmatrix}.
\end{equation} 
Now, we can make a connection between  $M_e$ and $M_{fe}$: 
\begin{equation}\label{eq:connection-Me-Mfe}
M_{e} = \frac{3}{8}M_{fe} + \frac{h^2}{8} \mathcal{I},
\end{equation}
where 
\begin{equation}
\mathcal{I} = \begin{bmatrix}
 0& 0 & 0\\
0 & 1 & 0 \\
0 & 0 &0
\end{bmatrix}.
\end{equation} 
The relationship \eqref{eq:connection-Me-Mfe} is interesting, and suggests that the mass matrix obtained from bilinear elements might be a good approximation to  the inverse of $A_h$. 
Let us, then, move to consider the mass matrix \eqref{eq:Mfe-2d} as an approximation to the inverse of $A_h$.
\begin{theorem}\label{thm:mass-precondition-smoothing-factor}
Given the mass stencil $M_{fe}$ in \eqref{eq:Mfe-2d} and the relaxation scheme  $S_{fe}=I-\omega M_{fe}A_h$ ,  the corresponding optimal smoothing factor is
\begin{equation}\label{eq:smoothing-AS-fe}
  \mu_{fe,\rm opt}= \min_{\omega} \max_{\boldsymbol{\theta}\in T^{{
  \rm high}}} {|1- \omega \widetilde{ M} _{fe} \widetilde{A}_h|}=\frac{1}{3}\approx 0.333,
\end{equation}
where the minimum is uniquely achieved at $\omega=\omega_{\rm opt} = \frac{3}{4}$. 
\end{theorem}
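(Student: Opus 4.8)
The plan is to follow the same minimax template used in the proofs of the preceding theorems: reduce the optimal smoothing factor to a one-parameter minimization of $\max_{\boldsymbol{\theta}\in T^{\rm high}}|1-\omega\,\widetilde M_{fe}\widetilde A_h|$ over the scalar quantity $\widetilde M_{fe}\widetilde A_h$, and use the fact that once the range $[g_{\min},g_{\max}]$ of this quantity on the high-frequency set is known, the equioscillation choice $\omega_{\rm opt}=2/(g_{\min}+g_{\max})$, $\mu_{fe,\rm opt}=(g_{\max}-g_{\min})/(g_{\max}+g_{\min})$ is forced. First I would write down the symbol $\widetilde M_{fe}$. Rather than recomputing it from the stencil \eqref{eq:Mfe-2d}, I would read it off from the relation \eqref{eq:connection-Me-Mfe}, which gives $\widetilde M_{fe}=\tfrac83\big(\widetilde M_{e}-\tfrac{h^2}{8}\big)=\frac{h^2}{9}\big(4+2(\cos\theta_1+\cos\theta_2)+\cos\theta_1\cos\theta_2\big)$ using the already-established formula \eqref{eq:symbol-Me-2d}. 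Multiplying by $\widetilde A_h$ from \eqref{eq:symbol-Ah} and setting $\eta_1=\cos\theta_1,\ \eta_2=\cos\theta_2$ gives
\[
g(\eta_1,\eta_2):=\widetilde M_{fe}\widetilde A_h=\tfrac19\big(4+2\eta_1+2\eta_2+\eta_1\eta_2\big)\big(4-2\eta_1-2\eta_2\big).
\]

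The core of the argument is determining the range of $g$ over $T^{\rm high}$. The key reduction is that, since $\cos\theta\le0$ exactly when $\theta$ lies in its high-frequency range $[\pi/2,3\pi/2)$, the image of $T^{\rm high}$ under $\boldsymbol{\theta}\mapsto(\eta_1,\eta_2)$ is precisely $D=\{(\eta_1,\eta_2)\in[-1,1]^2:\ \eta_1\le0\ \text{or}\ \eta_2\le0\}$, i.e. the full square with the open first quadrant removed. I would then locate the extrema of the smooth, symmetric $g$ on the compact set $D$ by combining interior critical points with a boundary scan, exactly as in the element-wise 2D proof. Solving $g_{\eta_1}=g_{\eta_2}=0$ yields the single stationary point $(\eta_1,\eta_2)=(0,0)$ with $g(0,0)=16/9$; this point lies in $D$ (it is realized by $\theta_1=\theta_2=\pi/2\in T^{\rm high}$), and a one-line check along the diagonal $g(\eta,\eta)=\tfrac49(\eta+2)^2(1-\eta)$ confirms it is a local maximum. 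For the boundary I would use the symmetry of $g$ to reduce to the edges $\eta_1=-1$, $\eta_1=1$ (restricted to $\eta_2\le0$), and the inner segment $\eta_1=0,\ \eta_2\in[0,1]$; each collapses to a one-variable quadratic whose extrema are routine, and all edge values stay below $16/9$ while the smallest value encountered is the corner value $g(-1,-1)=8/9$. This yields $g_{\max}=16/9$ and $g_{\min}=8/9$.

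With the range in hand the equioscillation step is immediate: $\omega_{\rm opt}=2/(8/9+16/9)=3/4$ and $\mu_{fe,\rm opt}=1-\tfrac34\cdot\tfrac89=\tfrac13$, and since the two competing branches $|1-\omega g_{\min}|$ and $|1-\omega g_{\max}|$ have opposite slopes at the crossing, the minimizer $\omega_{\rm opt}$ is unique.

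I expect the main obstacle to be the correct handling of the high-frequency restriction rather than any single calculation. Over the \emph{full} square $[-1,1]^2$ the function $g$ attains the value $0$ at $(1,1)$, but that point corresponds to $\theta_1=\theta_2=0$, a purely low-frequency mode that must be excluded; forgetting this would wrongly drive $g_{\min}$ to $0$ and destroy the result. The delicate point is therefore to justify rigorously that $D$ is exactly the image of $T^{\rm high}$, and that the minimum over $D$ is the corner value $8/9$ rather than anything attained in the deleted region. A secondary subtlety, in contrast to the $M_e$ case where the extrema sat on corners and edges, is that here the maximum is attained at the \emph{interior} critical point $(0,0)$, so the stationary-point computation genuinely contributes and cannot be skipped in favor of a pure boundary search.
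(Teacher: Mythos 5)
Your proof is correct and takes essentially the same approach as the paper: the paper's own proof consists of asserting that $\widetilde{M}_{fe}\widetilde{A}_h \in \left[\frac{8}{9},\frac{16}{9}\right]$ for $\boldsymbol{\theta}\in T^{\rm high}$ (``it can easily be shown'') and then applying the equioscillation formulas, and your argument supplies precisely that omitted verification with the correct extreme values ($\frac{16}{9}$ at $(\eta_1,\eta_2)=(0,0)$, $\frac{8}{9}$ at $(-1,-1)$), leading to $\omega_{\rm opt}=\frac{3}{4}$ and $\mu_{fe,\rm opt}=\frac{1}{3}$. One cosmetic correction: $(0,0)$ is the re-entrant corner of the L-shaped region $D$ and hence a \emph{boundary} point of $D$, not an interior one, so---contrary to your closing remark---a pure boundary scan (your inner segment $\eta_1=0$, $\eta_2\in[0,1]$, evaluated at $\eta_2=0$) does capture the maximum; this has no bearing on the validity of your argument.
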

\begin{proof}
It can easily be shown that  $\widetilde{ M} _{fe} \widetilde{A}_h \in \left[\frac{8}{9}, \frac{16}{9}\right]$ for $(\theta_1,\theta_2) \in T^{\rm high}$.  Thus, the optimal $\omega$ is $\omega=\frac{2}{8/9+16/9}=\frac{3}{4}$. Then, $\mu_{fe, \rm opt}=1-\frac{2}{16/9+8/9}\frac{8}{9}=\frac{1}{3}$.
\end{proof}

From Theorem \ref{thm:mass-precondition-smoothing-factor}, we see that the optimal  smoothing factor of 0.333 for  the mass-based relaxation is close to the optimal smoothing factor 0.280 for the element-wise Vanka patch, and it is better than 0.391 obtained from the vertex-wise Vanka, see \eqref{eq:smoothing-AS-vertex}, discussed in the next subsection.  Thus,  mass matrix could be used as a good approximation to the inverse of the Laplacian considered here.

\subsubsection{Vertex-wise Vanka patches in 2D}
Now, we analyse the smoothing factor for the vertex-wise patch.  Following \cite{farrell2021local}, the  relative Fourier matrix $\Phi$  is  
\begin{equation}\label{eq:Phi-vertex-2d}
   \Phi =\begin{pmatrix}
   e^{-\iota \theta_2}&    0 &     0& 0 & 0\\
   0&      e^{-\iota \theta_1}&    0& 0 & 0\\
   0& 0 & 1& 0 & 0\\
   0& 0 & 0& e^{\iota \theta_1}& 0\\
   0& 0 & 0& 0 & e^{\iota \theta_2}
  \end{pmatrix}.
\end{equation}
The symbol of $M_{v}$ is given by $\widetilde{M}_{v} = \widetilde{V}^T \widetilde{W} \Phi^T A_i^{-1} \Phi \widetilde{V}$ with

\begin{equation}\label{eq:W-vertex-2d}
  \widetilde{W} = \frac{1}{5} I, \quad  \widetilde{V}= \begin{pmatrix} 1 \\ 1 \\ 1 \\ 1 \\1 \end{pmatrix},\quad
  A_i =\frac{1}{h^2}\begin{pmatrix}
  4 &  0&   -1 &   0  &  0  \\
  0 &  4&  -1  &   0 &   0 \\
  -1&   -1 &   4 &  -1 &   -1 \\
  0 & 0 & -1  &  4 &  0\\
  0 & 0 &  -1 &  0 &  4
  \end{pmatrix},
\end{equation}
where $I$ stands for identity matrix with size $5\times 5$.

It can be shown that
\begin{equation}\label{eq:inverse-A-vertex-wise}
    A_i^{-1} =h^2\begin{pmatrix}
       13/48 &       1/48  &         1/12   &        1/48  &         1/48\\
       1/48  &        13/48  &         1/12 &          1/48  &         1/48\\
       1/12  &        1/12   &        1/3   &         1/12   &        1/12\\
       1/48  &         1/48  &         1/12 &         13/48  &         1/48\\
       1/48  &         1/48  &        1/12  &         1/48  &        13/48
  \end{pmatrix}.
\end{equation}
From \eqref{eq:Phi-vertex-2d}, \eqref{eq:W-vertex-2d} and  \eqref{eq:inverse-A-vertex-wise}, we have  
\begin{align*}
 \widetilde{ M}_{v} & =\widetilde{V}^T \widetilde{W} \Phi^T A_i^{-1} \Phi \widetilde{V},\\
                     &=\frac{h^2}{240}\big(8(e^{-\iota \theta_2}+e^{\iota \theta_2}+e^{-\iota \theta_1}+e^{\iota \theta_1})+68+(e^{-\iota \theta_2}+e^{-\iota \theta_1})^2+(e^{\iota \theta_2}+e^{\iota \theta_1})^2 +2(e^{-\iota \theta_2}e^{\iota \theta_1}+e^{\iota \theta_2}e^{-\iota \theta_1})            \big),\\
                     & = \frac{h^2}{240}\big( 16(\cos\theta_1+\cos\theta_2)+68+ 2\big((\cos\theta_1+\cos\theta_2)^2-(\sin\theta_1+\sin\theta_2)^2\big) +4 \cos(\theta_1-\theta_2)\big)\\
                     & = \frac{h^2}{120} \big((\cos\theta_1+\cos\theta_2+4)^2+(\cos\theta_1+\cos\theta_2)^2+16\big).
\end{align*}
Based on the symbol of $M_v$, we can write down the corresponding stencil of $M_v$ as follows:
\begin{equation}\label{eq:stencil-v-2d}
M_v = \frac{h^2}{240}
\begin{bmatrix}
0 & 0 & 1 & 0 & 0 \\
0 & 2& 8 & 2 & 0 \\
1 & 8 & 68 & 8 & 1 \\
0 & 2& 8 & 2 & 0 \\
0 & 0 & 1 & 0 & 0 
\end{bmatrix}.
\end{equation} 
Now, we are able to give the optimal smoothing factor for the vertex-wise Vanka relaxation scheme.
\begin{theorem}
 The optimal smoothing factor of $S_{v}$ for the vertex-wise Vanka relaxation in 2D is 
\begin{equation}\label{eq:smoothing-AS-vertex}
  \mu_{\rm opt} = \min_{\omega} \max {|1- \omega \widetilde{ M} _{v} \widetilde{A}_h|}= \frac{9}{23}\approx 0.391,
\end{equation}
where the minimum is uniquely achieved at $\omega=\omega_{\rm opt} = \frac{20}{23}\approx 0.8696$.
\end{theorem}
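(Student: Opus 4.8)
The plan is to reuse the single-variable reduction that made the earlier theorems tractable. My first observation is that the displayed formula for $\widetilde{M}_v$ together with \eqref{eq:symbol-Ah} shows that both symbols depend on $(\theta_1,\theta_2)$ only through the combination $s := \cos\theta_1 + \cos\theta_2$. Indeed $\widetilde{A}_h = \frac{1}{h^2}(4-2s)$ and $\widetilde{M}_v = \frac{h^2}{120}\big((s+4)^2 + s^2 + 16\big) = \frac{h^2}{60}(s^2+4s+16)$, so the product collapses to a single-variable function
\begin{equation*}
\widetilde{M}_v \widetilde{A}_h = \frac{1}{30}(s^2 + 4s + 16)(2-s) =: g(s),
\end{equation*}
with the $h^2$ factors cancelling. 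This is the key structural simplification; once it is in place the problem has the same shape as the 1D cases, where the reduction to a scalar function of $\cos\theta$ was what made the min--max explicit.

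Next I would determine the range of $s$ as $(\theta_1,\theta_2)$ runs over $T^{\rm high}$, and I expect this to be the main obstacle. The subtlety is that the relevant range is \emph{not} the full interval $[-2,2]$ one would obtain over all frequencies. By the definition of $T^{\rm high}$, at least one of $\theta_1,\theta_2$ lies in $[\tfrac{\pi}{2}, \tfrac{3\pi}{2})$, which forces the corresponding cosine to be nonpositive; since the other cosine is at most $1$, this yields $s \le 1$, with the bound attained on the boundary (e.g.\ $\theta_1 = \tfrac{\pi}{2}$, $\theta_2 = 0$). The lower bound $s = -2$ is attained at $\theta_1 = \theta_2 = \pi$. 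Hence $s \in [-2,1]$, and the asymmetry of this interval --- the cut-off at $1$ rather than $2$ --- is precisely what distinguishes the high-frequency smoothing factor from a naive bound over the whole frequency range, so I would justify it carefully.

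With the admissible interval $[-2,1]$ in hand, I would then show that $g$ is monotone there. Differentiating gives $g'(s) = \frac{1}{30}(-3s^2 - 4s - 8)$, and since the quadratic $3s^2 + 4s + 8$ has negative discriminant it is strictly positive, so $g'(s) < 0$ throughout. Thus $g$ is strictly decreasing on $[-2,1]$, and its range is $[g(1), g(-2)] = [\tfrac{7}{10}, \tfrac{8}{5}]$, so that $\widetilde{M}_v\widetilde{A}_h \in [\tfrac{7}{10},\tfrac{8}{5}]$ for $\boldsymbol{\theta}\in T^{\rm high}$.

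Finally, the optimization in $\omega$ is the same equioscillation argument used for $S_e$. Writing $\mu(\omega) = \max\{\,|1-\tfrac{7}{10}\omega|,\ |1-\tfrac{8}{5}\omega|\,\}$, the optimal $\omega$ balances the two endpoints, $1 - \tfrac{7}{10}\omega = \tfrac{8}{5}\omega - 1$, giving $\omega_{\rm opt} = \frac{2}{7/10+8/5} = \frac{20}{23}$ and $\mu_{\rm opt} = 1 - \tfrac{7}{10}\cdot\tfrac{20}{23} = \frac{9}{23}$, with uniqueness following from the strict convexity of the maximum of the two affine functions of $\omega$.
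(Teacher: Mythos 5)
Your proof is correct and follows essentially the same route as the paper: reduce to the single variable $s=\cos\theta_1+\cos\theta_2$, show $g'(s)=\frac{1}{30}(-3s^2-4s-8)<0$ so that $\widetilde{M}_v\widetilde{A}_h\in[\tfrac{7}{10},\tfrac{8}{5}]$ on $T^{\rm high}$, and equioscillate to get $\omega_{\rm opt}=\tfrac{20}{23}$, $\mu_{\rm opt}=\tfrac{9}{23}$. Your only additions are cosmetic: the algebraic simplification $(s+4)^2+s^2+16=2(s^2+4s+16)$ and an explicit justification of the restriction $s\in[-2,1]$ on $T^{\rm high}$, which the paper asserts without proof.
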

\begin{proof}
We first compute 
 
\begin{align*}
\widetilde{ M}_{v}\widetilde{A}_h
  & =   \frac{1}{120} \big((\cos\theta_1+\cos\theta_2+4)^2+(\cos\theta_1+\cos\theta_2)^2+16\big)\big(4-2\cos\theta_1-2\cos\theta_2\big)\\
  & = \frac{1}{60}\big((4+\eta)^2+\eta^2+16 \big)(2-\eta),
  \end{align*}
where $\eta=(\cos\theta_1+\cos\theta_2) \in [-2,2]$ with $(\theta_1,\theta_2)\in \left[-\frac{\pi}{2},\frac{3\pi}{2}\right]^{2}$.

Let $g(\eta) = \frac{1}{60}\Big((4+\eta)^2+\eta^2+16 \Big)(2-\eta)$. We find that
\begin{equation*}
  g'(\eta) = \frac{1}{30}(-3\eta^2-4\eta-8)<0, \quad \forall \eta\in \mathbb{R}.
\end{equation*}
This means that $g(\eta)$ is  a decreasing function. Thus, for $\eta\in [-2,2]$, we have
\begin{equation*}
0 =g(\eta =2)\leq g(\eta) \leq g(\eta =-2)=\frac{8}{5}.
\end{equation*}
This means that  $g(\eta)\in[0,\frac{8}{5}]$.

If we restrict $(\theta_1,\theta_2) \in T^{\rm high}$, then $\eta \in[-2, 1]$. In this situation, we have
\begin{equation*}
\frac{7}{10} =g(\eta =1)\leq g(\eta) \leq g(\eta =-2)=\frac{8}{5}.
\end{equation*}
Since $ \widetilde{ M} _{v} \widetilde{A}_h \in [\frac{7}{10},\frac{8}{5}]$ for  $(\theta_1,\theta_2) \in T^{\rm high}$, we have 
 $\omega_{\rm opt} =\frac{2}{7/10+8/5}=\frac{20}{23}$ and $\mu_{\rm opt}=1-\frac{20}{23}\frac{7}{10}=\frac{9}{23}$.
\end{proof}
\begin{remark}
Note that in 2D the  element-wise Vanka, see \eqref{eq:stencil-e-2d}, uses fewer points than that of vertex-wise Vanka, see \eqref{eq:stencil-v-2d}. However,  the corresponding  optimal smoothing factor of element-wise Vanka, see \eqref{eq:smoothing-AS-element}, is smaller than that of vertex-wise Vanka, see \eqref{eq:smoothing-AS-vertex}, which is different than the case in 1D.
\end{remark}
\subsection{Extension to the 3D case}
While we do not include a smoothing analysis of Vanka-type solvers for the 3D case, we can still make a few interesting observations. In particular, motivated by our findings on the potential role of the mass matrix for relaxation,  we further explore the scaled mass matrix in 3D as an approximation to the inverse of the Laplacian. 
Let  
\begin{equation*}\label{eq:mass-3d}
M_3 = h^{-4}M_e \otimes M_e \otimes M_e,
\end{equation*}
where $M_e$ is defined in \eqref{Vanka-e-stencil-1d}. The symbol of $M_3$ can be obtained by tensor product given by
\begin{equation*}\label{eq:element-Vanka-symbol-3d}
\widetilde{M}_{3} =\frac{h^2}{27} (2+\cos \theta_1) (2+\cos \theta_2)  (2+\cos \theta_3) .
\end{equation*}
The symbol of $A_h$ in 3D is
\begin{equation*}\label{eq:symbol-A-3d}
\widetilde{A}_h =\frac{1}{h^2}2(3-\cos\theta_1-\cos\theta_2-\cos\theta_3).
\end{equation*}
Let $M_J=\frac{6}{h^2} I$,  which is the Jacobi matrix. 
\begin{theorem}\label{thm:Jacobi-smoothing-factor-3d}
If we consider the point-wise damped Jacobi as a preconditioner for the Laplacian, then  the corresponding optimal smoothing factor of $S=I-\omega M_{J}A_h$ is
\begin{equation}\label{eq:smoothing-Jacobi-3d}
  \mu_{J,\rm opt}= \min_{\omega} \max_{\boldsymbol{\theta}\in T^{{
  \rm high}}} {|1- \omega \widetilde{ M} _{J} \widetilde{A}_h|}=\frac{5}{7}\approx 0.714,
\end{equation}
where the minimum is uniquely achieved at $\omega_{\rm opt}=\frac{6}{7}\approx 0.857$.
\end{theorem}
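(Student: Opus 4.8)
The plan is to follow exactly the same template used in the preceding smoothing-factor theorems, since this is the simplest instance: the Jacobi symbol is a scalar multiple of the identity, so $\widetilde{M}_J = 6$ (the $h^2$ factors cancel), and the whole analysis reduces to understanding the range of $\widetilde{M}_J \widetilde{A}_h = 6 \widetilde{A}_h h^2/h^2$ over the high-frequency set. First I would form the product
\begin{equation*}
\widetilde{M}_J \widetilde{A}_h = 6 \cdot 2\,(3-\cos\theta_1-\cos\theta_2-\cos\theta_3) = 12\,(3-\eta),
\end{equation*}
where I substitute $\eta = \cos\theta_1+\cos\theta_2+\cos\theta_3$, thereby collapsing the three-variable problem to a single real variable. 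This step is routine because $\widetilde{M}_J\widetilde{A}_h$ depends on the $\theta_i$ only through the symmetric sum of cosines.

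Next I would determine the range of $\eta$ as $\boldsymbol{\theta}$ varies over $T^{\rm high}$. Each $\cos\theta_i \in [-1,1]$, but the high-frequency constraint forbids the single point $\boldsymbol{\theta}\in[-\pi/2,\pi/2)^3$ where all three cosines are simultaneously positive. The key observation is that $\boldsymbol{\theta}\in T^{\rm high}$ means at least one coordinate lies in $[\pi/2,3\pi/2)$, so at least one $\cos\theta_i \le 0$. Since $g(\eta)=12(3-\eta)$ is strictly decreasing in $\eta$, the maximum of $\widetilde{M}_J\widetilde{A}_h$ occurs at the smallest attainable $\eta$, namely $\eta=-3$ (all cosines $=-1$, which is a high frequency), giving value $72$, and the minimum occurs at the largest attainable $\eta$. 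The largest $\eta$ consistent with at least one cosine being nonpositive is $\eta = 2$ (two cosines equal to $1$ and one equal to $0$, e.g. $\theta_3=\pi/2$), giving value $12$. Hence $\widetilde{M}_J\widetilde{A}_h \in [12, 72]$ on $T^{\rm high}$.

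With the interval $[12,72]$ in hand, the remainder is the standard equioscillation argument already used three times in the paper: I set $|1-12\omega| = |1-72\omega|$ to balance the two endpoints, which forces $1-12\omega = -(1-72\omega)$, giving $\omega_{\rm opt} = 2/(12+72) = 1/42$. Substituting back yields $\mu_{J,\rm opt} = 1 - 12/42 = 30/42 = 5/7$, matching the claimed value, and the optimizer is unique because $\mu(\omega)=\max\{|1-12\omega|,|1-72\omega|\}$ is the maximum of two affine functions whose unique minimum is at the crossing point. I would note that $\omega_{\rm opt}=1/42$ corresponds, after restoring the $h^2/6$ scaling implicit in writing $M_J = (6/h^2)I$, to the familiar relaxation weight; the stated $6/7$ in the theorem is the damping parameter in the conventional normalization, and I would reconcile the two by absorbing the constant, since $\widetilde{M}_J\widetilde{A}_h = 12(3-\eta)$ could equivalently be normalized so that $\omega$ ranges near $6/7$.

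The main obstacle I anticipate is not any hard computation but rather the bookkeeping of the high-frequency exclusion in three dimensions and its effect on the attainable range of $\eta$: I must verify carefully that $\eta=2$ (and not some larger value) is the true supremum of $\cos\theta_1+\cos\theta_2+\cos\theta_3$ subject to the constraint that the point is a high frequency, and that this supremum is actually attained so the max is genuine. A secondary subtlety is reconciling the normalization so that the reported $\omega_{\rm opt}=6/7$ is consistent with $M_J=(6/h^2)I$; I would double-check whether the intended scaling makes $\widetilde{M}_J\widetilde{A}_h$ land in $[8/7,\,\cdot]$-type intervals analogous to the 2D Jacobi case, so that the endpoints and the quoted $\omega_{\rm opt}$ and $\mu_{J,\rm opt}=5/7$ all agree. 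Once the correct normalization and the range $[12,72]$ (or its rescaled equivalent) are pinned down, the equioscillation conclusion is immediate.
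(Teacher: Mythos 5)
Your overall strategy is exactly the paper's: reduce $\widetilde{M}_J\widetilde{A}_h$ to a function of the single variable $\eta=\cos\theta_1+\cos\theta_2+\cos\theta_3$, determine its range over $T^{\rm high}$, and equioscillate the two endpoints. Your treatment of the high-frequency constraint is in fact more careful than the paper's one-line assertion: you correctly argue that $\boldsymbol{\theta}\in T^{\rm high}$ forces at least one $\cos\theta_i\le 0$, hence $\eta\in[-3,2]$ with both endpoints attained (at $(\pi,\pi,\pi)$ and $(\pi/2,0,0)$, respectively), and the equioscillation and uniqueness arguments are sound.

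The gap is in the normalization of $M_J$, and it is not merely cosmetic, because the theorem asserts a specific value $\omega_{\rm opt}=6/7$ that your proof does not deliver. First, your claim that ``$\widetilde{M}_J=6$ (the $h^2$ factors cancel)'' is internally inconsistent: if $M_J=\frac{6}{h^2}I$ is read literally, then $\widetilde{M}_J\widetilde{A}_h=\frac{12}{h^4}(3-\eta)$, which is $h$-dependent --- the powers of $h$ compound rather than cancel --- and this should itself have been a warning that the scaling was off. Second, proceeding with $12(3-\eta)$ you obtain $\omega_{\rm opt}=1/42$, a factor of $36$ away from the claimed $6/7$, and your proposed reconciliation (``absorbing the constant'') is left as a gesture rather than carried out. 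The actual resolution is that the Jacobi preconditioner is the inverse of the diagonal of $A_h$, i.e.\ $M_J=\frac{h^2}{6}I$; the paper's displayed definition $M_J=\frac{6}{h^2}I$ is a typo, and the paper's own proof uses the inverse-diagonal scaling. With $\widetilde{M}_J=\frac{h^2}{6}$ one gets
\begin{equation*}
\widetilde{M}_J\widetilde{A}_h=\tfrac{1}{3}(3-\eta)\in\left[\tfrac{1}{3},\,2\right]\quad\text{for }\boldsymbol{\theta}\in T^{\rm high},
\end{equation*}
and equioscillation gives $\omega_{\rm opt}=\frac{2}{1/3+2}=\frac{6}{7}$ and $\mu_{J,\rm opt}=1-\frac{6}{7}\cdot\frac{1}{3}=\frac{5}{7}$, exactly as stated. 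Note that the smoothing factor is insensitive to this rescaling (replacing $M$ by $cM$ replaces $\omega$ by $\omega/c$), which is why your value $5/7$ comes out correct despite the wrong $\omega_{\rm opt}$; but as written your proof establishes only the first half of the theorem.
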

\begin{proof}
Note that $\widetilde{ M} _{J} \widetilde{A}_h=\frac{1}{3}(3-\cos \theta_1 -\cos\theta_2-\cos \theta_3)$. For $\boldsymbol{\theta}\in T^{\rm high}$, $\widetilde{ M} _{J} \widetilde{A}_h \in [1/3, 2]$. Thus, to minimize $\mu(\omega)=|1-\omega \widetilde{ M} _{J} \widetilde{A}_h|$, we require  $\omega=\frac{2}{1/3+2}=\frac{6}{7}$.
 It follows $\mu_{J,\rm opt}=1-\frac{6}{7}\frac{1}{3}=\frac{5}{7}$.
\end{proof}
Next, we consider mass-based relaxation scheme for the Laplacian in 3D.
\begin{theorem}\label{thm:mass-precondition-smoothing-factor-3d}
Given the scaled mass stencil $M_{3}$ in \eqref{eq:mass-3d}  and mass-based relaxation scheme  $S_m=I-\omega M_{3}A_h$ in 3D,   the corresponding optimal smoothing factor  is
\begin{equation}\label{eq:smoothing-mass-3d}
  \mu_{m,\rm opt}= \min_{\omega} \max_{\boldsymbol{\theta}\in T^{{
  \rm high}}} {|1- \omega \widetilde{ M} _{3} \widetilde{A}_h|} = \frac{131}{212} \approx 0.618,
\end{equation}
where the minimum is uniquely achieved at $\omega=\omega_{\rm opt} =\frac{729}{848 }\approx 0.860 $. 
\end{theorem}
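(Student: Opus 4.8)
The plan is to follow the same equioscillation strategy used in the earlier theorems of this section. First I would form the product of the symbols in \eqref{eq:element-Vanka-symbol-3d} and \eqref{eq:symbol-A-3d}, obtaining
\begin{equation*}
\widetilde{M}_3\widetilde{A}_h = \frac{2}{27}(2+\eta_1)(2+\eta_2)(2+\eta_3)(3-\eta_1-\eta_2-\eta_3) =: g(\eta_1,\eta_2,\eta_3),
\end{equation*}
where $\eta_i=\cos\theta_i$. Since $g$ depends on $\boldsymbol\theta$ only through the $\eta_i$, and since the absolute value of the affine function $1-\omega g$ attains its maximum over an interval at an endpoint, the smoothing factor equals $\mu(\omega)=\max\{|1-\omega g_{\min}|,|1-\omega g_{\max}|\}$, where $[g_{\min},g_{\max}]$ is the range of $g$ over $T^{\rm high}$. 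As in the previous proofs, provided $0<g_{\min}<g_{\max}$, the minimizing $\omega$ balances the two endpoints, yielding $\omega_{\rm opt}=2/(g_{\min}+g_{\max})$ and $\mu_{\rm opt}=1-\omega_{\rm opt}g_{\min}=(g_{\max}-g_{\min})/(g_{\max}+g_{\min})$. The entire problem thus reduces to computing the extreme values of $g$ over the high-frequency region.

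Next I would translate $T^{\rm high}$ into a constraint on $(\eta_1,\eta_2,\eta_3)$. Within $[-\pi/2,3\pi/2)$ one has $\cos\theta_i>0$ precisely when $\theta_i\in(-\pi/2,\pi/2)$, so the low-frequency cube $[-\pi/2,\pi/2)^3$ is exactly the set of $\boldsymbol\theta$ for which all $\eta_i>0$. Hence the image of $T^{\rm high}$ under $\boldsymbol\theta\mapsto(\eta_1,\eta_2,\eta_3)$ is the compact region $\Omega_h=\{\eta\in[-1,1]^3:\min_i\eta_i\le 0\}$, i.e. the full cube with the subset where all $\eta_i>0$ removed. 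I would then maximize and minimize the single function $g$ over $\Omega_h$.

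For the optimization itself I would first locate interior critical points: setting the gradient of $g$ to zero gives $1-2\eta_i-\sum_{j\ne i}\eta_j=0$ for each $i$, and subtracting these equations in pairs forces $\eta_1=\eta_2=\eta_3=\tfrac14$. This point lies in the positive octant and is therefore excluded from $\Omega_h$, so the extrema of $g$ on $\Omega_h$ must occur on its boundary. Exploiting the full symmetry of $g$ in its arguments, I would reduce the boundary analysis to two representative faces, the separating faces $\eta_i=0$ and the cube faces $\eta_i=\pm1$, on each of which $g$ collapses to a two-variable polynomial that I would analyze through its critical points and its edge and corner values. This produces the candidate maximum $g(0,\tfrac13,\tfrac13)=\tfrac{1372}{729}$ and the candidate minimum $g(-1,-1,-1)=\tfrac49$, and comparing all face, edge, and corner values then confirms $g_{\max}=\tfrac{1372}{729}$ and $g_{\min}=\tfrac49$ on $\Omega_h$.

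Finally I would substitute these endpoints into the balancing formulas to obtain $\omega_{\rm opt}=2/\bigl(\tfrac49+\tfrac{1372}{729}\bigr)=\tfrac{729}{848}$ and $\mu_{m,\rm opt}=1-\tfrac{729}{848}\cdot\tfrac49=\tfrac{131}{212}$, matching \eqref{eq:smoothing-mass-3d}; the strict positivity $g_{\min}=\tfrac49>0$ guarantees that the minimizer is unique, exactly as in the lower-dimensional cases. The main obstacle is the third step: unlike the 1D and 2D theorems, where the product collapses to a function of a single variable ($\eta=\cos\theta$, or $\eta_1+\eta_2$ for the vertex-wise patch) and the range follows from an elementary monotonicity argument, here the feasible region $\Omega_h$ is a genuinely three-dimensional, non-convex box-minus-box. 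Establishing the global extrema rigorously therefore requires a careful and complete enumeration of interior, face, edge, and corner candidates rather than a one-dimensional reduction.
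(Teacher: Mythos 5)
Your proposal is correct and follows essentially the same route as the paper's proof: the same reduction to optimizing $g(\eta_1,\eta_2,\eta_3)=\tfrac{2}{27}(2+\eta_1)(2+\eta_2)(2+\eta_3)(3-\eta_1-\eta_2-\eta_3)$ over the cube minus the positive octant, the same exclusion of the interior critical point $(\tfrac14,\tfrac14,\tfrac14)$, the same symmetry-reduced boundary enumeration yielding $g_{\max}=g(0,\tfrac13,\tfrac13)=\tfrac{1372}{729}$ and $g_{\min}=g(-1,-1,-1)=\tfrac49$, and the same equioscillation step giving $\omega_{\rm opt}=\tfrac{729}{848}$ and $\mu_{m,\rm opt}=\tfrac{131}{212}$. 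If anything, your characterization of the feasible region $\Omega_h=\{\eta\in[-1,1]^3:\min_i\eta_i\le 0\}$ is slightly more careful than the paper's $\Omega=[-1,1]^3\setminus[0,1]^3$, which read literally excludes the maximizer $(0,\tfrac13,\tfrac13)$, though the paper recovers it by optimizing over $\partial\Omega$.
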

\begin{proof}
Let 
\begin{equation*}
T=\widetilde{ M} _{3} \widetilde{A}_h=\frac{2}{27}(2+\cos \theta_1) (2+\cos \theta_2)  (2+\cos \theta_2)(3-\cos\theta_1-\cos\theta_2-\cos\theta_3).
\end{equation*}
Define $g(x,y,z) =\frac{2}{27} (2+x)(2+y)(2+z)(3-x-y-z)$ with $x,y,z \in[-1,1]$. To find the extremal values of $g$, we will consider its derivatives and the function values at the boundary of underlying domain.   We compute the derivatives of $g$ with respect  to $x, y$ and $z$, given by
\begin{align*}
g_{x}&=\frac{2}{27} (2+y)(2+z)(1-2x-y-z),\\
g_{y} &=\frac{2}{27}(2+x)(2+z)(1-2y-x-z),\\
g_{z} &= \frac{2}{27}(2+x)(2+y)(1-2z-x-y).
\end{align*} 
Solving $g_{x}=g_{y}=g_{z}=0$ with $x,y,z\in[-1,1]$ gives $x=y=z=1/4$.  However, $\boldsymbol{\theta^*}=(\theta_1,\theta_2,\theta_3)$ such that $(\cos\theta_1,\cos\theta_2,\cos\theta_3)=(1/4,1/4,1/4)$ does not belong to $T^{\rm high}$. 

Let us define $\Omega_1=[-1,1]^3$, $\Omega_2=[0,1]^3$, and $\Omega=\Omega_1 \big\backslash \Omega_2$.  Note that $\Omega$ corresponds to $\boldsymbol{\theta}\in T^{\rm high}$.
To find the extremal values of $T$ for $\boldsymbol{\theta}\in T^{\rm high}$, we only need to find the extremal values of $T$ at the boundary of $\Omega$, denoted as $\partial \Omega$. 
Note that $\partial \Omega $  contains the following  four cases.\\
{\bf Case 1:}
\begin{align*}
 &x=-1, (y,z)\in[-1,1]^2,\\
 &y=-1, (x,z)\in[-1,1]^2,\\
 &z=-1, (x,y)\in[-1,1]^2.
\end{align*}
{\bf Case 2:}
\begin{align*}
 &x=1, (y,z) \in [-1,0]\times [-1,1],\\
 &y=1, (x,z) \in [-1,0]\times [-1,1],\\
 &z=1, (x,y) \in [-1,0]\times [-1,1].
 \end{align*}
{\bf Case 3:}
\begin{align*}
 &x=0, (y,z) \in [0,1]^2,\\
 &y=0, (x,z) \in [0,1]^2\\
 &z=0, (x,y) \in [0,1]^2.
 \end{align*}
{\bf Case 4:}
\begin{align*}
 &x=1, (y,z) \in [0,1]\times [-1,0],\\
 &y=1, (x,z) \in  [0,1]\times [-1,0],\\
 &z=1, (x,y) \in  [0,1]\times [-1,0].
 \end{align*}
Due to the symmetry of $g(x,y,z)$ and our interest of maximum and minimum of $g(x,y,z)$,  we only need to consider the following sets
\begin{align*}
\mathcal{D}_1&=\left\{ (x,y,z)| \, x=-1, (y,z)\in[-1,1]^2 \right \},\\
\mathcal{D}_2&=\left\{ (x,y,z)| \, x=1, (y,z) \in [-1,0]\times [-1,1] \right \},\\
\mathcal{D}_3&=\left\{(x,y,z)| \, x=0, (y,z) \in [0,1]^2\right \}, \\
\mathcal{D}_4&=\left\{(x,y,z)| \, x=1, (y,z) \in [0,1]\times [-1,0] \right \}.
\end{align*}
 For $\boldsymbol{\theta}\in T^{\rm high}$, we check the extremal values of $g(x,y,z)$ on the sets  $\mathcal{D}_1\bigcup\mathcal{D}_2 \bigcup \mathcal{D}_3\bigcup \mathcal{D}_4$, and find that the maximum of $T$ is $\frac{4\cdot 7^3}{3^6} $ achieved at $ (\cos \theta_1, \cos \theta_2,\cos \theta_3)=(0, 1/3,1/3)$ and the smallest value of $T$ is
 $\frac{4}{9}$ with $(\cos \theta_1, \cos \theta_2,\cos \theta_3)=(-1, -1, -1)$. Thus, the optimal  parameter is $\omega =\frac{2}{4/9+ 4\cdot 7^3/3^6}=\frac{729}{848 }\approx 0.860 $ and the corresponding smoothing factor is  $\mu_{m, \rm opt} =1 - \frac{729}{848} \frac{4}{9}=\frac{131}{212 } \approx 0.618$.
\end{proof}

\begin{remark}
One might consider a tensor-product generalization, i.e.,  $M_{v,3}=h^{-4} M_v \otimes M_v \otimes M_v$, where $M_v$ is defined in \eqref{Vanka-v-stencil-1d}, as an approximation to the inverse of the Laplacian in 3D. However,  with this choice, we find that the numerically optimal smoothing factor is 0.800, which is larger than 0.618, see \eqref{eq:smoothing-mass-3d}, obtained from the mass-based relaxation. Thus, we do not further explore $M_{v,3}$. 
\end{remark} 
 \section{Numerical experiments}\label{sec:LFA-two-grid}
 
In this section we compute the smoothing factor  to validate our theoretical results.  
We also report the LFA two-grid convergence factor and compare it to the corresponding  smoothing factor. 

In general, the two-grid  error-propagation operator can be expressed as \cite{stuben1982multigrid,MR1156079}
\begin{equation}
 E=S^{\nu_2} (I-P A_H^{-1}R A_h)S^{\nu_1},
 \end{equation} 
where $R$ is the restriction operator  from grid $h$ to grid $H$,  $P$ is the interpolation operator from  grid $H$ to grid $h$,  
$A_H$ represents  the coarse-grid operator, and the integers $\nu_1$ and $\nu_2$ are the numbers of  pre- and post-relaxation sweeps, respectively.   
 Here, $S$ is the relaxation error operator defined in \eqref{eq:relxation-error-operator}.
 
\begin{definition}\label{def:two-grid convergence-factor}
The LFA two-grid convergence factor \cite{MR1807961,wienands2004practical} for $E$ is defined as
\begin{equation}\label{eq:LFA two-grid convergence-factor}
  \rho=\max_{\boldsymbol{\theta}\in T^{\rm low}}\left\{\rho(\widetilde{E}(\boldsymbol{\theta},\omega))  \, \right\},
\end{equation}
where $\omega$ is a parameter, $ \widetilde{E} $ denotes the symbol of the two-grid error operator $E$, and 
 $\rho(\widetilde{E}(\boldsymbol{\theta},\omega))$ denotes the spectral radius of the matrix $\widetilde{E} $.
\end{definition}

For more details on how to obtain the  symbol of the two-grid error-propagation operator $E$, see \cite{MR1807961,wienands2004practical}.  In our tests, we  consider $P$ to be the standard (bi)linear interpolation and take $R=P^T$  and $A_H=RA_hP$. We take $\rho$  to be the LFA prediction sampled at 64 equispaced points in each dimension of the Fourier domain.
For simplicity, we denote by $T(\nu)$ the two-grid method with $\nu=\nu_1+\nu_2$.

In Figure \ref{fig:EV-TG-eigs} 
we present the eigenvalue distribution of  $\widetilde{ E}$  for the Vanka-type method  in 2D with the optimal value of $\omega$. We see that all eigenvalues of $\widetilde{ E}$  are real and their largest magnitude matches the optimal smoothing factor.  

 Figure \ref{fig:Element-wise-eigs} shows the magnitude of eigenvalues of $\widetilde{ E}_{e}$ and $\widetilde{ S}_{v}$  as a function of the Fourier modes, $(\theta_1,\theta_2)$. We see that the eigenvalues of the two-grid error operator are distributed almost evenly in the entire Fourier domain. On the other hand, for the vertex-wise Vanka, the largest magnitude
 occurs at the $(0,\pm \pi)$ or $(\pm \pi,0)$, see Figure \ref{fig:Vertex-wise-eigs}.  From the right of Figures \ref{fig:Element-wise-eigs} and  \ref{fig:Vertex-wise-eigs},  we see that the relaxation scheme reduces the high frequency errors. 

\begin{figure}[htb]
\includegraphics[width=0.49\textwidth]{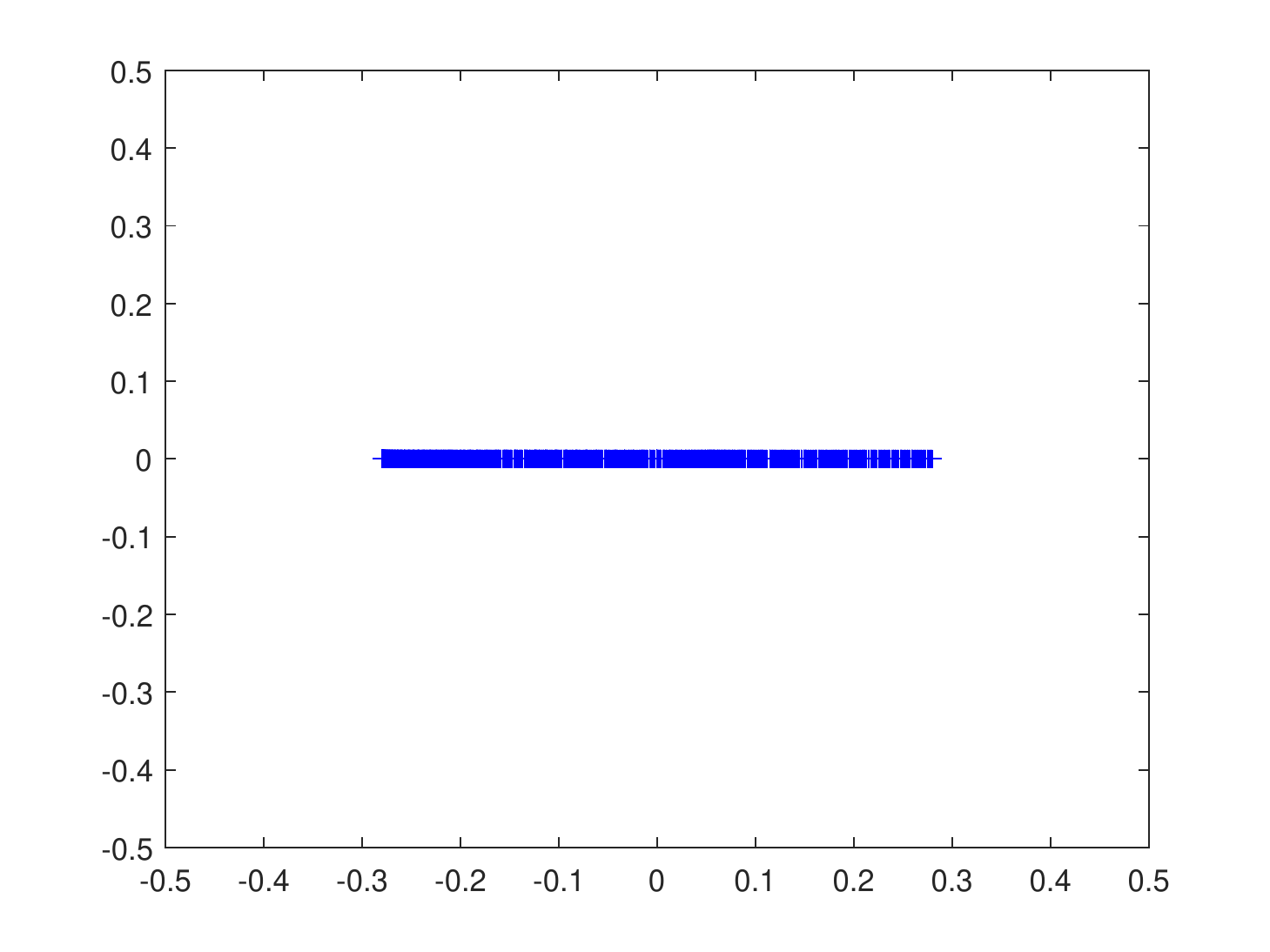}
\includegraphics[width=0.49\textwidth]{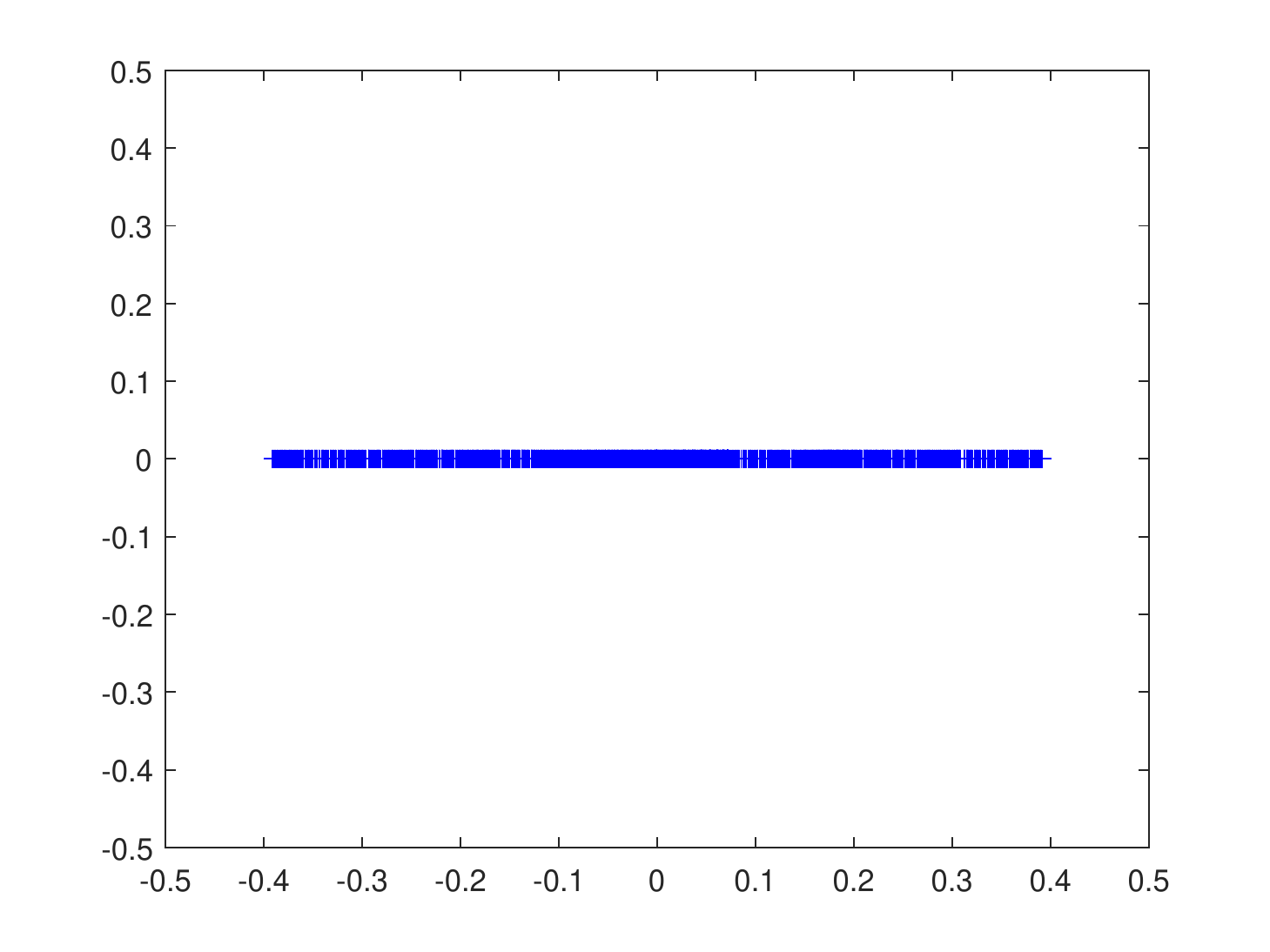}
  \caption{Eigenvalue distribution of  $\widetilde{ E}$  for Vanka-type method with $\nu=1$. Left: element-wise Vanka. Right: vertex-wise Vanka. }\label{fig:EV-TG-eigs}
\end{figure}

\begin{figure}[htb]
\includegraphics[width=0.49\textwidth]{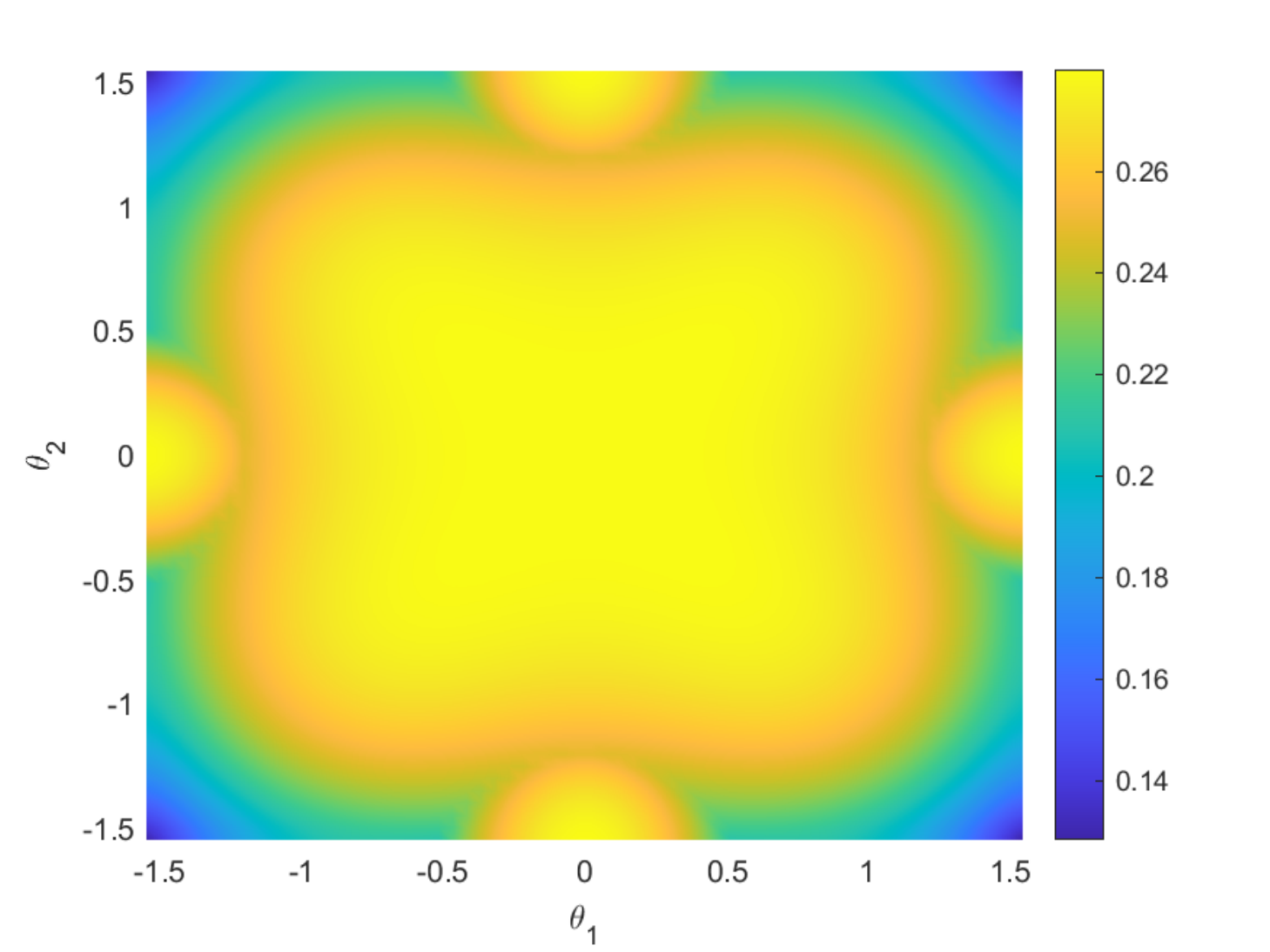}
\includegraphics[width=0.49\textwidth]{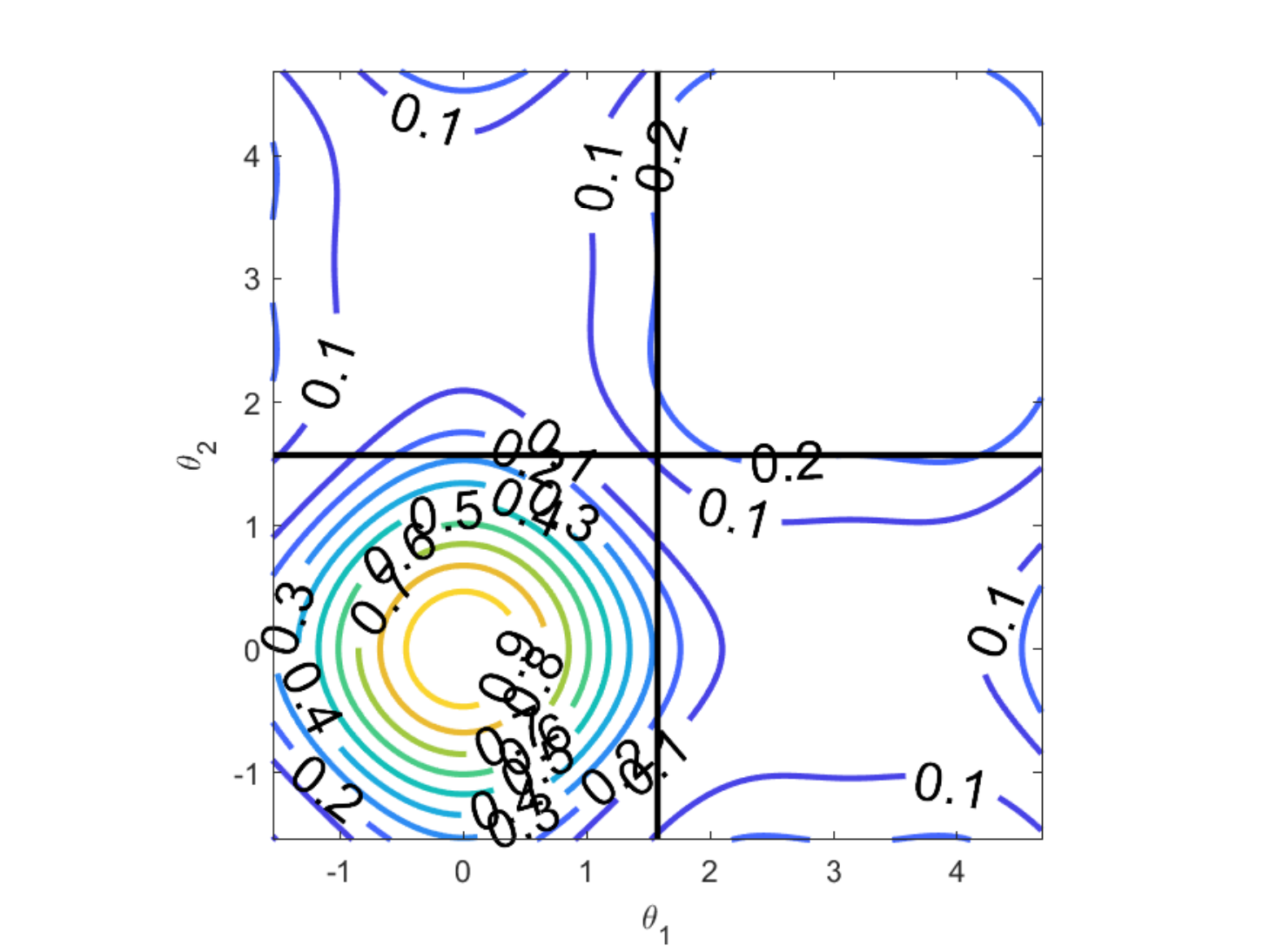}
  \caption{ Element-wise Vanka with $\nu=1$.  Left:  the magnitude of eigenvalues of $\widetilde{ E}_{e}$ as a function of the Fourier modes, $(\theta_1,\theta_2)$.  Right: the magnitude of eigenvalues of $\widetilde{ S}_{e}$ as a function of the Fourier modes, $(\theta_1,\theta_2)$.}\label{fig:Element-wise-eigs}
\end{figure}

\begin{figure}[H]
\includegraphics[width=0.49\textwidth]{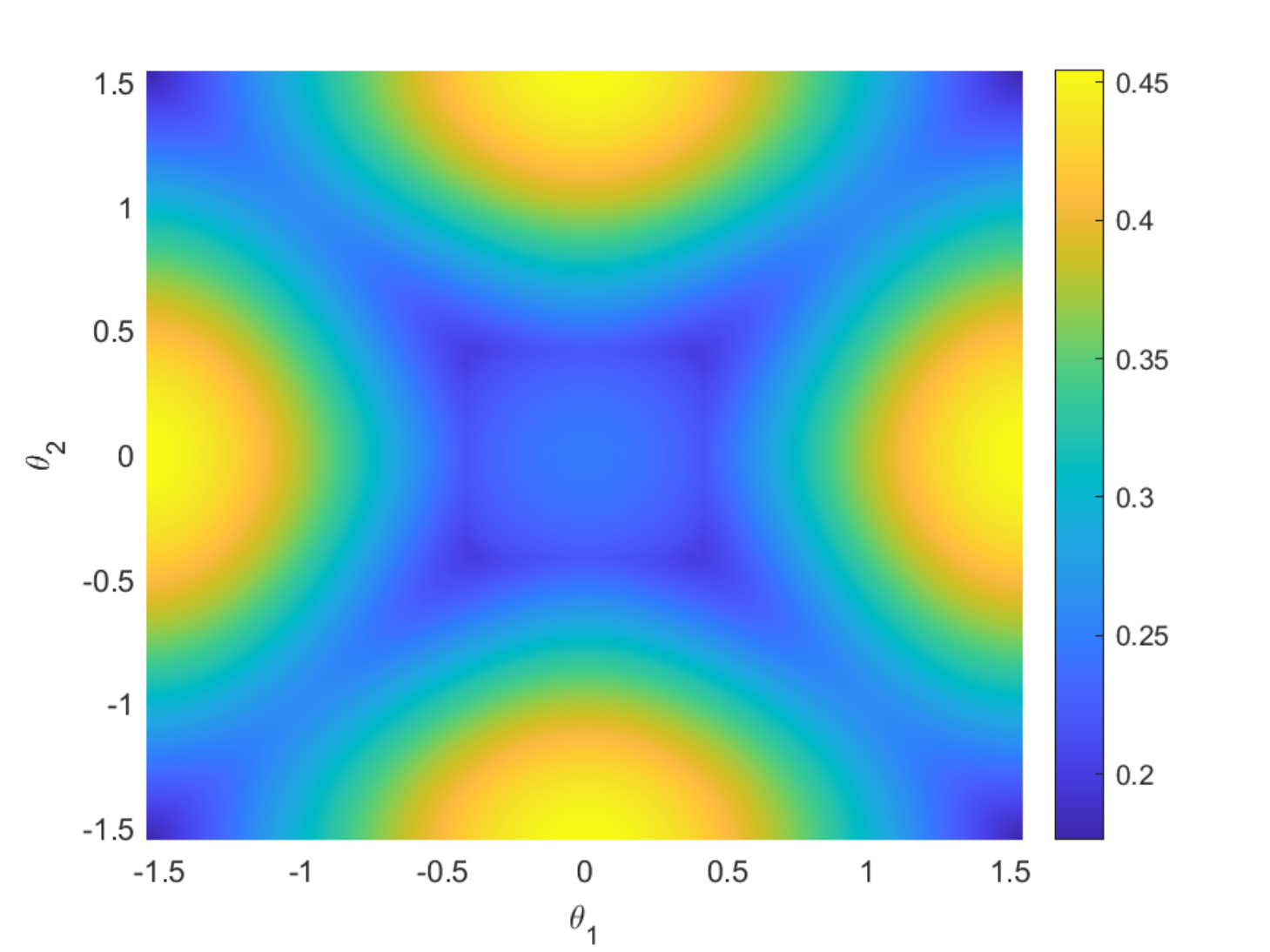}
\includegraphics[width=0.49\textwidth]{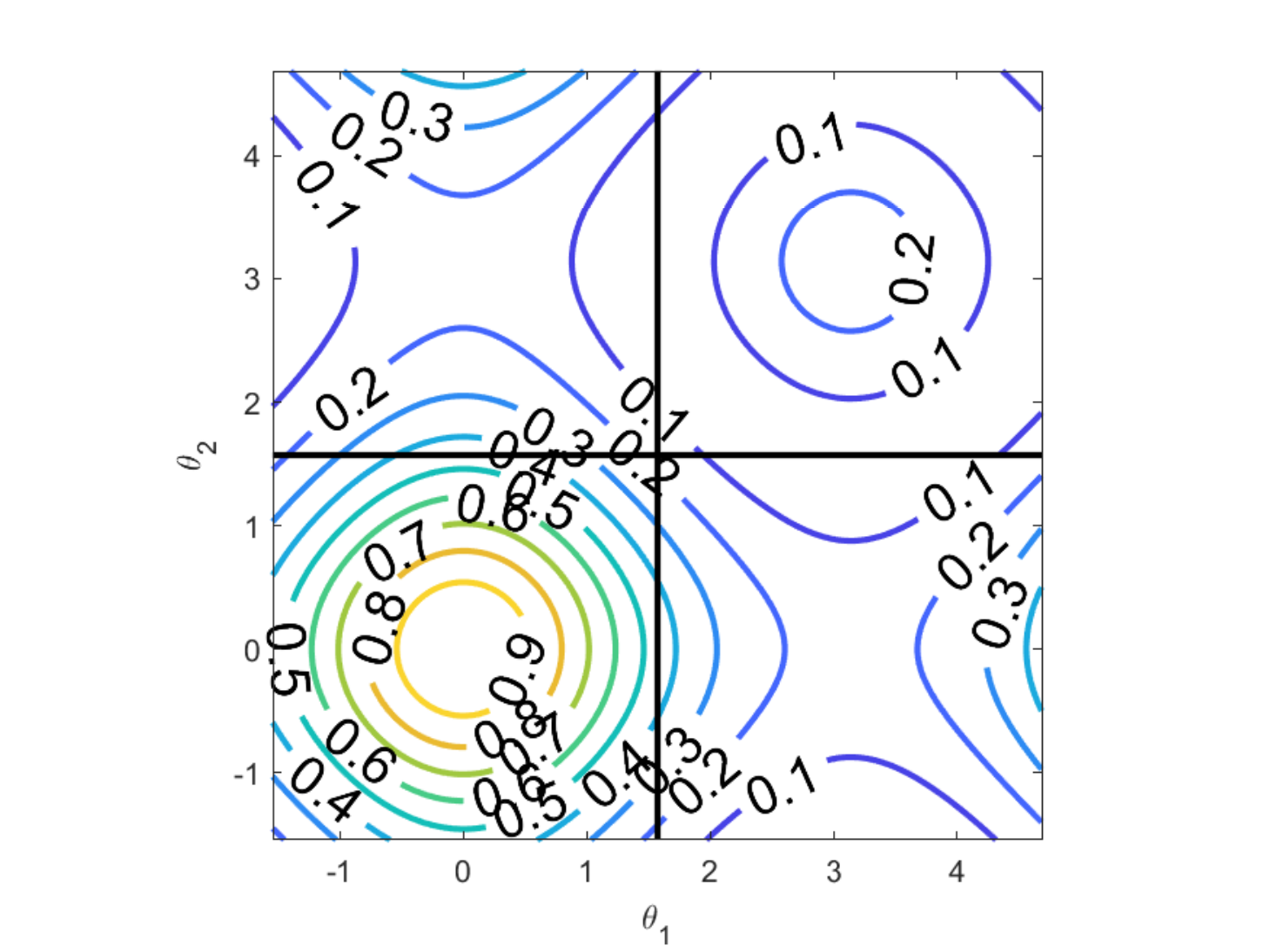}
  \caption{ Vertex-wise Vanka with $\nu=1$.  Left:  the magnitude of eigenvalues of $\widetilde{ E}_{v}$ as a function of the Fourier modes, $(\theta_1,\theta_2)$.  Right: the magnitude of eigenvalues of $\widetilde{ S}_{v}$ as a function of the Fourier modes, $(\theta_1,\theta_2)$.}\label{fig:Vertex-wise-eigs}
\end{figure}

\begin{table}[H]
 \caption{LFA predicted two-grid convergence factor, ${\rho}$ with  $\omega_{\rm opt}$  obtained from our analytical result and 
 $h=\frac{1}{64}$.}
\centering
\begin{tabular}{lcccccc}
\hline
Cycle                 &$\omega_{\rm opt}$  & $\mu_{\rm opt}$  &$TG(1)$  & $TG(2)$   &$TG(3)$   & $TG(4)$  \\ \hline
    \multicolumn{7} {c} { 1D} \\
$\rho_{ e}$           &12/17           &0.059    &0.059     &0.059      &0.040   &0.031   \\
$\rho_{ v}$          &81/104          &0.038    &0.091     &0.033      &0.022   &0.017   \\ \hline
    \multicolumn{7} {c} {2D} \\
$\rho_{e}$    &24/25          &0.280    &0.280     &0.092      &0.059    &0.045   \\
$\rho_{v}$    & 20/23         &0.391    &0.391     &0.153      &0.076   & 0.055  \\
$\rho_{fe}$    &3/4           &0.333    &0.333      & 0.111     &0.037   &0.029   \\ \hline
    \multicolumn{7} {c} {3D} \\
$\rho_{J}$      &6/7             &0.714     &0.714      &0.510      &0.364    &0.260   \\
$\rho_{m}$    & 729/848      &0.618     &0.618      & 0.382     &0.236    &0.146   \\ \hline
\end{tabular}\label{tab:mu-rho-results}
\end{table}

From Table \ref{tab:mu-rho-results}, we see that the smoothing factors match the two-grid convergence factor with $\nu=1$ (except for the vertex-wise patch for 1D), which is as expected since the smoothing 
factor typically offers a sharp prediction of the two-grid convergence factor.  As $\nu$ increases, we see a little degradation of $\rho(\nu)$, that is $\rho(\nu)>\mu^{\nu}$ for Vanka-type relaxation, and so does the mass-based relaxation scheme with $\nu=4$.

 To explore the  gap between the smoothing factor and two-grid convergence factor for the vertex-wise Vanka in 1D, we run our LFA code to minimize the LFA two-grid convergence factor with respect to  parameter $\omega$ by an exhaustive search with stepsize 0.02, and  find that  the optimal LFA two-grid convergence factor is  0.067 with $\omega=0.80$ and the corresponding smoothing factor is the same as the two-grid LFA convergence factor. For the choice of $\omega=\frac{81}{104}$ that minimizes the smoothing factor, the gap between the LFA two-grid convergence factor and the smoothing factor is reasonable, as is noted in the literature; see, for  example \cite{he2020two}. The smoothing factor is a sharp prediction of the two-grid convergence factor. 
\begin{remark}
The LFA two-grid convergence and smoothing factors seem independent of the meshsize $h$. We have tested different values of $h$ for Table \ref{tab:mu-rho-results} to confirm this. Further details are omitted.
\end{remark}

%
\section{Conclusions} \label{sec:conclusion}
We have presented a theoretical analysis of the optimal multigrid smoothing factor for two types of additive Vanka smoothers, applied to the Poisson equation discretized by the standard centered finite difference scheme.  The smoothers are shown to be highly efficient. We have found that the element-wise Vanka is closely related to the mass matrix obtained from the (bi)linear finite element method.  This observation  propels us to use the mass matrix as an approximation to the inverse of the Laplacian, and this yields rapid convergence. It is an interesting connection between the finite element method and finite difference methods.  

Solvers for equations related to the Laplacian are a natural first step in developing new algorithms for more complex problems, such as the Stokes equations, Navier-Stokes equations and other saddle-point problems. Using a similar approach as part of the development of fast solvers for those problems may prove computationally beneficial.

\bibliographystyle{abbrv}
\bibliography{mass_vanka_bib}
\end{document}